\newtheorem{theorem}{Theorem}
\newtheorem{corollary}{Corollary}
\newtheorem{proposition}{Proposition}
\newtheorem{lemma}{Lemma}
\newtheorem{remark}{Remark}
\newtheorem{observation}{Observation}
\newtheorem{definition}{Definition}
\def\sqrt{\textup{sqrt}}
\def \T{\textup{T}}
\def \diag{\textup{diag}}
\def \Res{\textup{Res}}
\newcommand{\rmnum}[1]{\romannumeral #1}
\newcommand\restr[2]{{
		\left.\kern-\nulldelimiterspace 
		#1 
		\right|_{#2} 
}}
\newcommand{\Rmnum}[1]{\expandafter\@slowromancap\romannumeral #1@}
\title{On the determinant of the walk matrix of the rooted product with a path}
\author{\small Zhidan Yan\quad\quad Wei Wang\thanks{Corresponding author: wangwei.math@gmail.com}
\\
{\footnotesize School of Mathematics, Physics and Finance, Anhui Polytechnic University, Wuhu 241000, China}}
\date{}
\begin{document}
 \maketitle

\begin{abstract}
 For an $n$-vertex graph $G$, the walk matrix of $G$, denoted by $W(G)$, is the matrix $[e,A(G)e,\ldots,(A(G))^{n-1}e]$, where $A(G)$ is the adjacency matrix of $G$ and $e$ is the all-ones vector. For two integers $m$  and  $\ell$ with $1\le \ell\le (m+1)/2$, let $G\circ P_m^{(\ell)}$ be the rooted product of $G$ and the path  $P_m$ taking the $\ell$-th vertex of $P_m$ as the root, i.e., $G\circ P_m^{(\ell)}$ is a graph  obtained from $G$ and $n$ copies of the path $P_m$  by identifying the $i$-th vertex of $G$ with the $\ell$-th vertex (the root vertex) of the $i$-th copy of $P_m$ for each $i$. We prove that, 
\begin{equation*}
\det W(G\circ P_m^{(\ell)}) =
	\begin{cases}
			\pm (\det A(G))^{\lfloor\frac{m}{2}\rfloor}(\det W(G))^m & \text{if $\gcd(\ell,m+1)=1$,} \\
		0&\text{otherwise.}
	\end{cases}
\end{equation*}
This extends a recent result established in [Wang et al. Linear Multilinear Algebra 72 (2024): 828--840] which corresponds to the special case $\ell=1$. As a direct application, we prove that if $G$ satisfies $\det A(G)=\pm 1$ and $\det W(G)=\pm 2^{\lfloor n/2\rfloor}$,  then for any sequence of integer pairs ${(m_i,\ell_i)}$ with  $\gcd(\ell_i,m_i+1)=1$ for each $i$, all the graphs in the family
\begin{equation*}
	G\circ P_{m_1}^{(\ell_1)}, (G\circ P_{m_1}^{(\ell_1)})\circ P_{m_2}^{(\ell_2)}, ((G\circ P_{m_1}^{(\ell_1)})\circ P_{m_2}^{(\ell_2)})\circ P_{m_3}^{(\ell_3)},\ldots
\end{equation*}
are determined by their generalized spectrum.\\

\noindent\textbf{Keywords}: walk matrix; rooted product graph;  generalized spectrum;  Chebyshev polynomials.\\
\noindent
\textbf{AMS Classification}: 05C50
\end{abstract}
\section{Introduction}
\label{intro}
Let $G$ be a simple graph with vertex set $\{1,\ldots,n\}$.  The \emph{adjacency matrix} of $G$ is the $n\times n$ symmetric matrix $A=(a_{i,j})$, where $a_{i,j}=1$ if $i$ and $j$ are adjacent;  $a_{i,j}=0$ otherwise.  For a graph $G$, the \emph{walk matrix} of $G$ is
\begin{equation*}
W(G):=[e,Ae,\ldots,A^{n-1}e],
\end{equation*}
where $e$ is the all-ones vector. This particular kind of  matrix has many interesting properties and is related to several important problems such as the spectral characterization of graphs \cite{wang2017,wwz2023,ww2024}, the controllability \cite{godsil2012,rourke2016}  and reconstructibility \cite{gm1981} of graphs. For more studies on  the walk matrices of graphs, we refer to \cite{liu2022,choi2021,moon2023,wang2021}.

Let $H^{(v)}$ be a rooted graph with a root vertex $v$. The \emph{rooted product graph} \cite{godsil1978,schwenk1974} of $G$ and $H^{(v)}$, denoted by $G\circ H^{(v)}$, is a graph obtained from $G$ and $n$ copies of $H^{(v)}$ by identifying the  root vertex of the $i$-th copy of $H^{(v)}$ with  vertex $i$ of $G$ for $i=1,2,\ldots,n$. Let $P_m$ be the path of order $n$ whose vertices are labeled naturally as $1,2,\ldots,m$. We are mainly concerned with the family of graphs $G\circ P_m^{(\ell)}$ for general integers $m$ and $\ell$ with $1\le \ell\le m$. Owing to the symmetry of the path $P_m$, we may safely assume that $\ell\le(m+1)/2$.   For the special case $\ell=1$, the authors established the following formula concerning  the determinant of  $W(G\circ P_m^{(\ell)})$, which was conjectured in \cite{mao2022}.
\begin{theorem}[\cite{wym2024}]\label{Pm1}
	For any graph $G$ and integer $m\ge 2$,
\begin{equation*}
\det W(G\circ P_m^{(1)})=\pm (\det A(G))^{\lfloor\frac{m}{2}\rfloor}(\det W(G))^m.
\end{equation*} 
\end{theorem}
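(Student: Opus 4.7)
The plan is to exploit the block tridiagonal structure of $\tilde{A}:=A(G\circ P_m^{(1)})$ obtained by indexing vertices as pairs $(i,j)$, with $i\in\{1,\ldots,n\}$ the original $G$-vertex and $j\in\{1,\ldots,m\}$ the level on the attached path (level $1$ being the root). In this ordering one has
\[
\tilde{A}=B\otimes I_n+E_{11}\otimes A,
\]
where $B=A(P_m)$ and $E_{11}$ is the $m\times m$ unit matrix with a single $1$ at position $(1,1)$. Introducing an indeterminate $x$, let $T(x)$ be the $m\times m$ symbolic tridiagonal matrix with $(1,1)$-entry $x$, off-diagonal entries $1$, and all other diagonal entries $0$, so that $\tilde{A}=T(A)$. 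A short induction on $k$ yields
\[
\tilde{A}^{k}\tilde{e}=\sum_{j=1}^{m}e_{j}\otimes q_{k,j}(A)\,e_n,\qquad \bigl(q_{k,1},\ldots,q_{k,m}\bigr)^{T}=T(x)^{k}\mathbf{1}_m,
\]
where $q_{0,j}\equiv 1$ and the polynomials $q_{k,j}\in\mathbb{R}[x]$ obey $q_{k+1,1}=xq_{k,1}+q_{k,2}$, $q_{k+1,j}=q_{k,j-1}+q_{k,j+1}$ for $2\le j\le m-1$, and $q_{k+1,m}=q_{k,m-1}$.

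Second, I factor the walk matrix. Every column of $W(\tilde{G})$ lies in the image of the $\mathbb{R}$-linear map $\Psi:(\mathbb{R}[x]/(\chi_A))^{m}\to\mathbb{R}^{nm}$ sending $(p_j)_j\mapsto\sum_{j}e_j\otimes p_j(A)e_n$. In the basis $\{e_j\otimes A^{i}e_n\}_{j\in[m],\,i\in[0,n-1]}$ of $\mathrm{im}\,\Psi$, the matrix of $\Psi$ is $I_m\otimes W(G)$. Reducing $q_{k,j}$ modulo $\chi_A$ and collecting the resulting coefficient vectors into an $nm\times nm$ real matrix $M_m$ gives
\[
W(\tilde{G})=(I_m\otimes W(G))\cdot M_m,
\]
so that $\det W(\tilde{G})=(\det W(G))^{m}\cdot\det M_m$. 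The theorem is thus reduced to proving $\det M_m=\pm(\det A)^{\lfloor m/2\rfloor}$.

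I would prove this residual identity by induction on $m$, with base cases $m=1$ (trivially $M_1=I_n$) and $m=2$ (verified by a direct Laplace expansion). For the inductive step I aim to establish $\det M_m=\pm\det A\cdot\det M_{m-2}$. The key symbolic input is the tridiagonal-determinant identity
\[
\det T(x)=(-1)^{\lfloor m/2\rfloor}\,x^{\epsilon(m)},\qquad \epsilon(m)=\tfrac{1-(-1)^m}{2},
\]
proved from the Chebyshev-type recursion $\psi_j(\mu)=\mu\psi_{j-1}(\mu)-\psi_{j-2}(\mu)$ with $\psi_1(\mu)=\mu-x$. To translate this into the recurrence for $\det M_m$, I perform unimodular column operations on $M_m$: the shift $q_{k+1,m}=q_{k,m-1}$ lets one eliminate the block row at level $m$, and a subsequent round using $q_{k+1,m-1}=q_{k,m-2}+q_{k,m}$ eliminates the block row at level $m-1$. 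What remains is, up to a signed permutation, the matrix $M_{m-2}$, and the detached $2n\times 2n$ boundary block has determinant $\pm\det A$.

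The main obstacle is the accounting of the boundary contribution: one must verify that the $2n\times 2n$ determinant produced by the two-level reduction equals $\pm\det A$ and not some other polynomial in the coefficients of $\chi_A$. This is where the symbolic identity enters decisively. When $m$ is odd, the single factor of $x$ in $\det T(x)$ propagates through the reduction modulo $\chi_A$ into a factor $N_{R/\mathbb{R}}(x)=\pm\det A$, where $R=\mathbb{R}[x]/(\chi_A)$; when $m$ is even, the same factor is produced instead by a combination of the symmetric interior recursion with the $xq_{k,1}$ term at the root. Once this boundary step is verified, the induction closes and produces the desired formula $\det M_m=\pm(\det A)^{\lfloor m/2\rfloor}$, completing the proof.
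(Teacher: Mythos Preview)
Your factorization $W(\tilde G)=(I_m\otimes W(G))\,M_m$ is correct and is a genuinely different entry point from the paper's argument. The paper does not prove Theorem~\ref{Pm1} directly (it is quoted from \cite{wym2024}), but its proof of the more general Theorem~\ref{main} specializes to $\ell=1$ and proceeds entirely spectrally: it writes down explicit eigenvectors $\eta_i^{(j)}$ of $\tilde A$, feeds them into the identity $\det W(H)=\dfrac{\prod_{p<q}(\mu_q-\mu_p)\,\prod_p e^{\T}\eta_p}{\det[\eta_1,\ldots,\eta_{mn}]}$, and then evaluates each of the three pieces as a resultant of Chebyshev-type polynomials $S_k$. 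Your route has the appealing feature that the factor $(\det W(G))^m$ is isolated immediately, without any eigenvalue computation, and the whole difficulty is concentrated in the single integer $\det M_m$.

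The gap is the inductive step for $\det M_m$. Two concrete problems. First, column operations cannot ``eliminate the block row at level $m$'': column operations preserve row spaces. The relation $q_{k+1,m}=q_{k,m-1}$ only says that, over $R=\mathbb{R}[x]/(\chi_A)$, row $m$ of the $m\times mn$ array $(\bar q_{k,j})$ is the right-shift of row $m-1$ with a $1$ prepended; this is a relation \emph{between} rows, not something a sequence of column operations turns into a vanishing row. Second, and more structurally, $M_m$ has $mn$ columns while $M_{m-2}$ has only $(m-2)n$: the extra $2n$ columns of $M_m$ correspond to powers $T(\bar x)^k$ with $(m-2)n\le k<mn$, which have no counterpart in $M_{m-2}$. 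After any manipulation of the bottom two block rows, the surviving $(m-2)n$ rows still carry all $mn$ columns, and there is no visible block-triangular structure forcing the top-left $(m-2)n\times(m-2)n$ corner to equal $M_{m-2}$ and a complementary $2n\times 2n$ block to have determinant $\pm\det A$. Your invocation of $\det T(x)=(-1)^{\lfloor m/2\rfloor}x^{\epsilon(m)}$ and the norm $N_{R/\mathbb{R}}(\bar x)=\pm\det A$ is suggestive---in the paper's language this is precisely the resultant computation $\Res(S_m-\lambda S_{m-1},S)$ for $\ell=1$---but it does not by itself organize those extra $2n$ columns into a clean boundary block. As written, the recursion $\det M_m=\pm\det A\cdot\det M_{m-2}$ is asserted rather than proved.
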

Theorem \ref{Pm1} has an important application in the generalized spectral characterization of graphs. For a graph $G$, the  spectrum of $G$ means the multiset of eigenvalues of the adjacency matrix $A(G)$. The spectrum of $G$ together with the spectrum of the complement $\overline{G}$ is referred to as the \emph{generalized spectrum} of $G$. A graph $G$ is \emph{determined by its generalized spectrum} (DGS for short) if any graph sharing the same generalized spectrum as $G$ must be isomorphic to $G$. It is known that for any $n$-vertex graph $G$, the determinant of $W(G)$ always has the form $2^{\lfloor n/2\rfloor}b$ for some  integer $b$ \cite{wang2013}. Furthermore,  a theorem of Wang \cite{wang2017} states that if $b$ (i.e., $2^{-\lfloor n/2\rfloor}\det W(G)$)  is odd and square-free, then $G$ is DGS. In particular, if $\det W(G)=\pm 2^{\lfloor n/2\rfloor}$ (i.e., $b=\pm 1$), then $G$ is DGS.
\begin{definition}
For an integer $n$, we define $\mathcal{F}_n$ to be the collection of all $n$-vertex graphs $G$ such that $\det A(G)=\pm 1$ and $\det W(G)=\pm 2^{\lfloor n/2\rfloor}$.
\end{definition}
We remark that $\mathcal{F}_n$ is empty if $n$ is odd. Indeed, according to Sachs' coefficient theorem, a graph $G$ with an odd number of vertices must satisfy $\det A(G)\equiv 0\pmod{2}$, contradicting the requirement that $\det A(G)=\pm 1$. Mao and Wang \cite{mao2022} showed that if $\det A(G)=\pm 1$ then $\det A(G\circ P_m^{(1)})=\pm 1$ for any $m\ge 2$. Let 
\begin{equation}
	\mathcal{F}=\bigcup_{n\text{~even}} \mathcal{F}_n
\end{equation}
Then it is easy to see from Theorem \ref{Pm1}  that $\mathcal{F}$ is \emph{closed} for the  operations $G\circ P_m^{(1)}$, that is, for all integers $m$,  it holds that $G\circ P_m^{(1)}\in \mathcal{F}$ whenever $G\in \mathcal{F}$.  Noting that each graph in $\mathcal{F}$ is DGS, we have obtained a method of constructing large DGS-graphs from smaller ones. We summarize this construction in the following theorem.

\begin{theorem}[\cite{wym2024}]\label{dgscon1}
	Let $G\in \mathcal{F}$. Then for any integer sequence ${m_i}$ with each $m_i\ge 2$, all the graphs in the family 
	\begin{equation*}
		G\circ P_{m_1}^{(1)}, (G\circ P_{m_1}^{(1)})\circ P_{m_2}^{(1)},  ((G\circ P_{m_1}^{(1)})\circ P_{m_2}^{(1)})\circ P_{m_3}^{(1)}, \cdots
	\end{equation*}
	are DGS.
\end{theorem}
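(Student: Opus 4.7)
The plan is to prove Theorem~\ref{dgscon1} by induction, combining Theorem~\ref{Pm1} with the Mao--Wang preservation result $\det A(G \circ P_m^{(1)}) = \pm 1$ whenever $\det A(G) = \pm 1$. The key observation is that $\mathcal{F}$ is closed under the operation $G \mapsto G \circ P_m^{(1)}$ for every $m \ge 2$; once this closure is established, iterating it shows that every graph in the given family lies in $\mathcal{F}$, and then Wang's criterion (quoted just before the definition of $\mathcal{F}_n$) forces each such graph to be DGS.

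For the closure step, suppose $G \in \mathcal{F}_n$ and set $H = G \circ P_m^{(1)}$, which has $nm$ vertices. I need both $\det A(H) = \pm 1$ and $\det W(H) = \pm 2^{\lfloor nm/2 \rfloor}$. The first holds by the Mao--Wang result. For the second, Theorem~\ref{Pm1} together with the hypotheses gives
\[
\det W(H) = \pm (\det A(G))^{\lfloor m/2 \rfloor} (\det W(G))^m = \pm 2^{m \lfloor n/2 \rfloor}.
\]
It remains to check that $m \lfloor n/2 \rfloor = \lfloor nm/2 \rfloor$. This is where the remark that $\mathcal{F}_n$ is empty for odd $n$ becomes essential: every $G \in \mathcal{F}$ has $n$ even, so $\lfloor n/2 \rfloor = n/2$, and then $m \lfloor n/2 \rfloor = mn/2 = \lfloor mn/2 \rfloor$ since $mn$ is even. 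Hence $H \in \mathcal{F}_{nm} \subseteq \mathcal{F}$, and the induction goes through by applying this closure successively to $G$, $G \circ P_{m_1}^{(1)}$, $(G \circ P_{m_1}^{(1)}) \circ P_{m_2}^{(1)}$, and so on.

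Because the two main ingredients (Theorem~\ref{Pm1} and the Mao--Wang determinant preservation) are already in hand, the argument is essentially bookkeeping and I do not expect a genuine obstacle. The only place where one could slip is the exponent arithmetic for powers of~$2$; the evenness of $n$ forced by membership in $\mathcal{F}$ removes any potential floor-function mismatch, which is precisely why the closure argument is so clean.
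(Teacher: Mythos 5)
Your proposal is correct and follows essentially the same route as the paper: the closure of $\mathcal{F}$ under $G\mapsto G\circ P_m^{(1)}$ via Theorem~\ref{Pm1} plus the Mao--Wang preservation of $\det A=\pm 1$, iterated and combined with Wang's DGS criterion (this is exactly the argument the paper sketches before stating Theorem~\ref{dgscon1}, and the same pattern it spells out later in Proposition~\ref{cons2sim}). Your explicit check that $m\lfloor n/2\rfloor=\lfloor nm/2\rfloor$ because $n$ is even is a detail the paper leaves implicit, but it is the right justification.
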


The main aim of this paper is to generalize the above two theorems from $G\circ P_m^{(1)}$ to $G\circ P_m^{(\ell)}$ for general $\ell$. The main result of this paper is the following.
\begin{theorem}\label{main}
	Let $n,m\ge 2$. For any $n$-vertex graph $G$ and integer $\ell$ with $1\le \ell\le (m+1)/2$,
		\begin{equation*}
			\det W(G\circ P_m^{(\ell)})=\begin{cases}
				\pm (\det A(G))^{\lfloor\frac{m}{2}\rfloor}(\det W(G))^m&\text{if $\gcd(\ell,m+1)=1$,} \\
				0&\text{otherwise.}
			\end{cases}
		\end{equation*}
\end{theorem} 
Using Theorem \ref{main} as a new tool, the method of constructing DGS-graphs stated in Theorem \ref{dgscon1} can be naturally extended.
\begin{theorem}\label{cons2}
		Let $G\in \mathcal{F}$, then for any  sequence of integer pairs ${(m_i,\ell_i)}$ with each $1\le \ell_i\le (m_i+1)/2$ and $\gcd(\ell_i,m_i+1)=1$, all graphs in the family 
	\begin{equation*}
		G\circ P_{m_1}^{(\ell_1)}, (G\circ P_{m_1}^{(\ell_1)})\circ P_{m_2}^{(\ell_2)},  ((G\circ P_{m_1}^{(\ell_1)})\circ P_{m_2}^{(\ell_2)})\circ P_{m_3}^{(\ell_3)}, \cdots
	\end{equation*}
	are DGS.
	\end{theorem}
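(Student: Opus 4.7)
The plan is to show that $\mathcal{F}$ is closed under the operation $G\mapsto G\circ P_m^{(\ell)}$ whenever $\gcd(\ell,m+1)=1$; once this closure is established, Theorem \ref{cons2} follows by induction on the length of the sequence $(m_i,\ell_i)$ combined with Wang's criterion \cite{wang2017} that any $H$ with $\det W(H)=\pm 2^{\lfloor|V(H)|/2\rfloor}$ is DGS. Fix $G\in\mathcal{F}_n$; since $\mathcal{F}_n$ is empty for odd $n$, we have $n$ even, $\det A(G)=\pm 1$, and $\det W(G)=\pm 2^{n/2}$. Put $N=nm$, still even. The walk-matrix condition for $G\circ P_m^{(\ell)}$ is immediate from Theorem \ref{main}:
\begin{equation*}
\det W(G\circ P_m^{(\ell)})=\pm(\det A(G))^{\lfloor m/2\rfloor}(\det W(G))^{m}=\pm 2^{nm/2}=\pm 2^{\lfloor N/2\rfloor}.
\end{equation*}

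The substantive point is verifying $\det A(G\circ P_m^{(\ell)})=\pm 1$. Using the block decomposition $A(G\circ P_m^{(\ell)})=I_n\otimes A(P_m)+A(G)\otimes e_\ell e_\ell^{\top}$, diagonalising $A(G)$, and applying the rank-one update identity to each resulting diagonal block, one obtains
\begin{equation*}
\det A(G\circ P_m^{(\ell)})=\prod_{i=1}^{n}\bigl(\det A(P_m)+\mu_i\det A(P_{\ell-1})\det A(P_{m-\ell})\bigr),
\end{equation*}
where the $\mu_i$ are the eigenvalues of $A(G)$ and we have used that $A(P_m)$ with row and column $\ell$ deleted equals $A(P_{\ell-1})\oplus A(P_{m-\ell})$. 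Recall the standard evaluation $\det A(P_k)=0$ for odd $k$ and $\det A(P_k)=(-1)^{k/2}$ for even $k$. If $m$ is even then $\det A(P_m)=\pm 1$ while $\det A(P_{\ell-1})\det A(P_{m-\ell})=0$ (because $\ell-1$ and $m-\ell$ have opposite parities), so the product collapses to $(\det A(P_m))^n=1$. If $m$ is odd then $\det A(P_m)=0$, and the condition $\gcd(\ell,m+1)=1$ with $m+1$ even forces $\ell$ odd; then both $\ell-1$ and $m-\ell$ are even, $\det A(P_{\ell-1})\det A(P_{m-\ell})=\pm 1$, and the product equals $(\pm 1)^n\prod_i\mu_i=\det A(G)=\pm 1$.

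The main obstacle lies in the $m$ odd branch, where the leading contribution $\det A(P_m)$ vanishes and the argument hinges entirely on $\det A(P_m\setminus\ell)$ being nonzero; this is exactly the parity consequence extracted from $\gcd(\ell,m+1)=1$. Absent this hypothesis the product collapses to zero and closure fails outright, which is consistent with $\det W(G\circ P_m^{(\ell)})=0$ in that regime as delivered by Theorem \ref{main}. The special case $\ell=1$ treated in \cite{mao2022} is the simplest instance of this analysis; the parity bookkeeping for general admissible $\ell$ is the new ingredient required here.
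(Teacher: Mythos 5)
Your proposal is correct and follows essentially the same route as the paper: closure of $\mathcal{F}$ under $G\mapsto G\circ P_m^{(\ell)}$ via Theorem \ref{main} for the walk matrix plus a parity analysis of $\det A(G\circ P_m^{(\ell)})=\pm\prod_i\bigl(\det A(P_m)+\lambda_i\det A(P_{\ell-1})\det A(P_{m-\ell})\bigr)$, then Wang's criterion. The only (cosmetic) difference is that you derive this product formula from the Kronecker decomposition and the matrix determinant lemma, whereas the paper reads it off by evaluating the rooted-product characteristic polynomial of Corollary \ref{chagpk} at $x=0$; the resulting case analysis on the parity of $m$ and $\ell$ is identical.
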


\section{Eigenvalues and eigenvectors of $G\circ P_m^{(\ell)}$}\label{comp_ev}

\begin{figure}
	\centering
	\includegraphics[height=4cm]{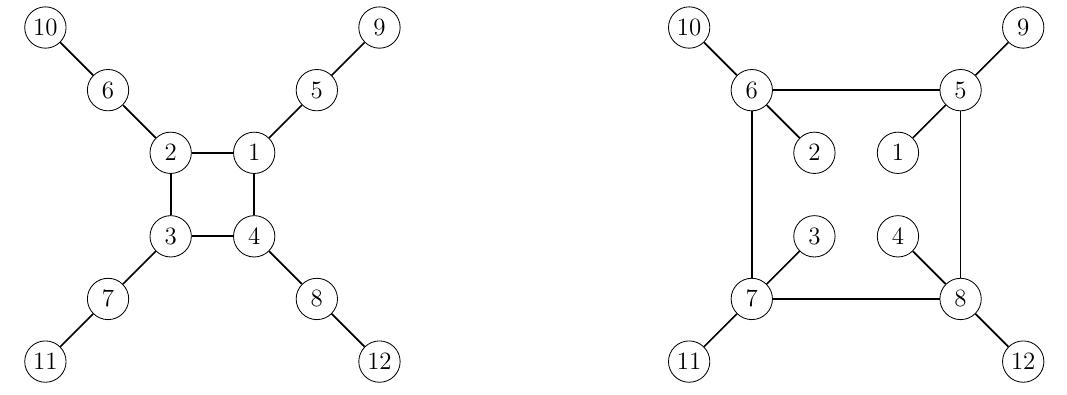}
	\caption{ $C_4\circ P_3^{(1)}$ (left) and $C_4\circ P_3^{(2)}$ (right).}
	\label{twographs}
\end{figure}
Let $U_m(x)$ be the $m$-th Chebyshev polynomial of the second kind, defined by
\begin{equation*}
	U_m(\cos \theta)=\frac{\sin(m+1)\theta}{\sin \theta}.
\end{equation*}
Let $S_m(x)=U_m(x/2)$. Then the sequence $\{S_m(x)\}$ satisfies the three-term recurrence relation:
 $S_{m}(x)=x S_{m-1}(x)-S_{m-2}(x)$ with initial values $S_0(x)=1$ and $S_1(x)=x$. We note that $S_m(x)$ is a monic polynomial with integral coefficients and is usually referred to as the \emph{renormalized} Chebyshev polynomial. For a graph $G$, let $\phi(G;x)=\det (xI-A(G))$ be the characteristic polynomial of $G$.  It is well known that $\phi(P_m;x)=S_m(x)$.

\begin{definition}\normalfont{
	Let $A=(a_{ij})$ be an $m\times n$ matrix and $B$  a $ p \times  q$ matrix. The \emph{Kronecker product} $A\otimes B$ is the  block matrix:
	$$A\otimes B=\begin{bmatrix}
	a_{11}B&\cdots&a_{1n}B\\
	\vdots&\ddots&\vdots\\
	a_{m1}B&\cdots&a_{mn}B
	\end{bmatrix}.
	$$
}
\end{definition}
By appropriately labeling the vertices in $G\circ P_m^{(\ell)}$ (see Figure \ref{twographs} for an illustration), the  adjacency matrix of $G\circ P_m$ can be described easily via the Kronecker product.
\begin{observation}\label{adjGP}
	$A(G\circ P_m^{(\ell)})=A(P_m)\otimes I_n+D_\ell\otimes A(G),$
	where $I_n$ is the identity matrix of order $n$ and $D_\ell$ is the diagonal matrix $\diag(\underbrace{0,\ldots,0}_{\ell-1},1,\underbrace{0,\ldots,0}_{m-\ell})$ of order $m$.
	\end{observation}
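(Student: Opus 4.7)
The statement is essentially a bookkeeping identity describing the block structure of $A(G\circ P_m^{(\ell)})$ under a particular vertex labeling, so the plan is to fix that labeling explicitly and then verify, entry by entry, that the two edge types in the rooted product correspond precisely to the two Kronecker summands.

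First I would index the $mn$ vertices of $G\circ P_m^{(\ell)}$ by pairs $(j,i)\in\{1,\dots,m\}\times\{1,\dots,n\}$, where $(j,i)$ denotes the $j$-th vertex of the $i$-th copy of $P_m$, and I would order them lexicographically in $j$ first (so the first $n$ vertices are the $j=1$ copies, the next $n$ are the $j=2$ copies, etc.). This is precisely the labeling suggested by Figure~\ref{twographs}, and it is the ordering for which Kronecker products $M\otimes N$ act as ``$M$ on the path coordinate, $N$ on the $G$ coordinate.'' Under this labeling, the identification prescribed by the rooted product simply identifies vertex $i$ of $G$ with the pair $(\ell,i)$, so $E(G\circ P_m^{(\ell)})$ splits into two disjoint parts: the ``vertical'' path edges inside each copy of $P_m$, namely $\{(j,i),(j{+}1,i)\}$ for $1\le j\le m-1$ and $1\le i\le n$, and the ``horizontal'' edges inherited from $G$ at level $\ell$, namely $\{(\ell,i),(\ell,i')\}$ whenever $ii'\in E(G)$.

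Next I would read off each part of the adjacency matrix in block form. For the path edges: within a fixed level $j$ there are no edges, and between levels $j$ and $j'$ the edges form a perfect matching $(j,i)\leftrightarrow(j',i)$ iff $|j-j'|=1$; the $(j,j')$ block is therefore $(A(P_m))_{j,j'}I_n$, which is exactly the $(j,j')$ block of $A(P_m)\otimes I_n$. For the $G$-edges: the $(j,j')$ block is the zero matrix unless $j=j'=\ell$, in which case it equals $A(G)$. Since $D_\ell=e_\ell e_\ell^{\mathrm T}$ has a single nonzero entry $(D_\ell)_{\ell,\ell}=1$, the $(j,j')$ block of $D_\ell\otimes A(G)$ is $(D_\ell)_{j,j'}A(G)$, which matches. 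Summing the two contributions yields the claimed identity.

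There is no genuine obstacle here; the only point that requires a little care is ensuring that the labeling convention matches the Kronecker-product orientation used throughout the paper (with $A(P_m)\otimes I_n$ acting on the path index and $D_\ell\otimes A(G)$ acting on the $G$ index), and that $D_\ell$ is written in the correct position $\ell$ along the diagonal, so that the identified vertices land at the intended level of each copy of $P_m$. Once the labeling is fixed and the two edge types are tabulated as above, the identity follows directly.
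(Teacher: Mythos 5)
Your proof is correct and follows exactly the route the paper itself intends: the paper states this as an Observation with no proof, merely noting that it follows ``by appropriately labeling the vertices,'' and your lexicographic-in-$j$ labeling together with the block-by-block tabulation of path edges versus $G$-edges is precisely the verification being taken for granted. The only caveat is notational: you write $D_\ell=e_\ell e_\ell^{\mathrm T}$ with $e_\ell$ a standard basis vector, whereas the paper reserves $e$ (with subscripts such as $e_n$, $e_{mn}$) for all-ones vectors, so that shorthand should be renamed or defined explicitly to avoid a clash.
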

For a graph $H$ and a vertex $v\in V(H)$, we use $H-v$ to denote the graph obtained from $H$ by deleting the vertex $v$ (and all edges incident with $v$). We need the following formula for the characteristic polynomial of a rooted product graph.
\begin{lemma}[\cite{schwenk1974,godsil1978,gutman1980}]
We have $\phi(G\circ H^{(v)};x)=\prod_{i=1}^n(\phi(H;x)-\lambda_i \phi(H-v;x))$, where $\lambda_1,\ldots,\lambda_n$ are eigenvalues of $G$.
\end{lemma}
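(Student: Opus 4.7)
The plan is to compute $\phi(G\circ H^{(v)};x)=\det(xI-A(G\circ H^{(v)}))$ by exploiting the Kronecker product structure of the adjacency matrix, in direct analogy with Observation~\ref{adjGP}. The starting point is to write, with the analogous vertex labelling,
\begin{equation*}
A(G\circ H^{(v)})=A(H)\otimes I_n+D_v\otimes A(G),
\end{equation*}
where $D_v$ is the $m\times m$ diagonal matrix with a single $1$ at position $(v,v)$; the first summand supplies the edges within each of the $n$ copies of $H$, while the second activates the edges of $G$ only at the root positions. This generalizes Observation~\ref{adjGP} and is verified directly from the definition of the rooted product.

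Next I would diagonalize the symmetric matrix $A(G)=U\Lambda U^{\T}$ with $\Lambda=\diag(\lambda_1,\ldots,\lambda_n)$ and conjugate the characteristic matrix of $G\circ H^{(v)}$ by the orthogonal matrix $I_m\otimes U$. This conjugation preserves the determinant, fixes $A(H)\otimes I_n$, and turns $D_v\otimes A(G)$ into $D_v\otimes\Lambda$, yielding
\begin{equation*}
xI_{mn}-A(H)\otimes I_n-D_v\otimes\Lambda.
\end{equation*}
A permutation of rows and columns that swaps the roles of the two tensor factors then makes this matrix block diagonal, with $n$ blocks of size $m\times m$, the $i$-th block being $xI_m-A(H)-\lambda_i D_v$. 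Consequently
\begin{equation*}
\phi(G\circ H^{(v)};x)=\prod_{i=1}^n\det\bigl(xI_m-A(H)-\lambda_i D_v\bigr).
\end{equation*}

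To finish, I would evaluate each factor using that $\lambda_i D_v=\lambda_i e_v e_v^{\T}$ is a rank-one update of $xI_m-A(H)$. The matrix determinant lemma (or equivalently, Laplace expansion along the $v$-th column) yields
\begin{equation*}
\det(xI_m-A(H)-\lambda_i D_v)=\phi(H;x)-\lambda_i\phi(H-v;x),
\end{equation*}
since deleting the $v$-th row and column of $xI_m-A(H)$ leaves $xI_{m-1}-A(H-v)$, whose determinant is $\phi(H-v;x)$. Substituting this into the product gives the claimed identity. The main obstacle I anticipate is purely bookkeeping: one has to be careful about the two possible orderings of the vertices of $G\circ H^{(v)}$ so that the Kronecker structure and the subsequent block-diagonalization are visible at the same time; beyond that, every step is standard linear algebra.
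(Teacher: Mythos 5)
Your proof is correct. Note that the paper does not actually prove this lemma --- it is quoted from the literature (Schwenk, Godsil--McKay, Gutman) and used as a black box to derive Corollary~\ref{chagpk} --- so there is no in-paper argument to compare against; what you have supplied is a clean, self-contained derivation that moreover meshes with the paper's own framework, since your identity $A(G\circ H^{(v)})=A(H)\otimes I_n+D_v\otimes A(G)$ is exactly the generalization of Observation~\ref{adjGP} from $P_m$ to an arbitrary rooted graph $H^{(v)}$. Each step checks out: conjugation by the orthogonal matrix $I_m\otimes U$ sends $D_v\otimes A(G)$ to $D_v\otimes\Lambda$ while fixing $A(H)\otimes I_n$, the commutation (perfect-shuffle) permutation is a determinant-preserving similarity that produces the $n$ blocks $xI_m-A(H)-\lambda_iD_v$, and the rank-one evaluation of each block is right. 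The only point worth tightening is the last one: the matrix determinant lemma as usually stated requires $xI_m-A(H)$ to be invertible, so either invoke it for the cofinitely many $x$ where this holds and extend by polynomial identity, or (better) use the multilinearity route you mention in passing --- the $v$-th column of $xI_m-A(H)-\lambda_ie_ve_v^{\T}$ splits as the $v$-th column of $xI_m-A(H)$ minus $\lambda_ie_v$, and expanding the second summand along that column gives $-\lambda_i$ times the $(v,v)$ cofactor, i.e.\ $-\lambda_i\det\bigl(xI_{m-1}-A(H-v)\bigr)=-\lambda_i\phi(H-v;x)$, with no invertibility needed. With that phrasing the argument is complete.
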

Noting that $\phi(P_m-\ell;x)=\phi(P_{\ell-1};x)\phi(P_{m-\ell};x)=S_{\ell-1}(x)S_{m-\ell}(x)$, the following corollary on $\phi(G\circ P_{m}^{(\ell)};x)$ is immediate.
\begin{corollary}\label{chagpk}
	We have $\phi(G\circ P_m^{(\ell)};x)=\prod_{i=1}^{n}(S_m(x)-\lambda_i S_{\ell-1}(x)S_{m-\ell}(x))$, where $\lambda_1,\ldots,\lambda_n$ are the eigenvalues of $G$.
\end{corollary}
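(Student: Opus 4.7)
The plan is to apply the cited rooted-product formula directly, with $H=P_m$ and $v$ chosen to be the $\ell$-th vertex of $P_m$ so that $G\circ H^{(v)}=G\circ P_m^{(\ell)}$. The lemma then gives
$$\phi(G\circ P_m^{(\ell)};x)=\prod_{i=1}^n\bigl(\phi(P_m;x)-\lambda_i\,\phi(P_m-\ell;x)\bigr),$$
so the entire task reduces to identifying the two polynomials $\phi(P_m;x)$ and $\phi(P_m-\ell;x)$ in terms of the renormalized Chebyshev polynomials $S_k(x)$.

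The first identification is already recorded in the paragraph immediately preceding the corollary: $\phi(P_m;x)=S_m(x)$. For the second, I would observe that removing the $\ell$-th vertex from $P_m$ disconnects the path into its two ``pieces'', a path on the vertices $1,\dots,\ell-1$ and a path on the vertices $\ell+1,\dots,m$, of orders $\ell-1$ and $m-\ell$ respectively. Since the characteristic polynomial of a disjoint union is the product of the characteristic polynomials of the components, and since $\phi(P_k;x)=S_k(x)$ for every $k\ge 0$ (with the convention $S_0(x)=1$ corresponding to the empty graph), one obtains
$$\phi(P_m-\ell;x)=\phi(P_{\ell-1};x)\,\phi(P_{m-\ell};x)=S_{\ell-1}(x)\,S_{m-\ell}(x).$$
Substituting these two identities into the product expression yields the desired formula.

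There is no real obstacle here; the only point needing a line of comment is the boundary case $\ell=1$ (and symmetrically $\ell=m$, though this is excluded by the assumption $\ell\le (m+1)/2$ except when $m=1$). In that case the ``left'' path $P_{\ell-1}$ is empty and the convention $S_0(x)=1$ correctly reduces the formula to $\phi(P_m-1;x)=S_{m-1}(x)$, consistent with the fact that deleting an endpoint of $P_m$ gives $P_{m-1}$. Thus the corollary is an immediate specialization of the general rooted-product identity, and no further work is required.
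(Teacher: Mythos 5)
Your proposal is correct and follows essentially the same route as the paper: the authors likewise invoke the cited rooted-product lemma and note that $\phi(P_m-\ell;x)=\phi(P_{\ell-1};x)\phi(P_{m-\ell};x)=S_{\ell-1}(x)S_{m-\ell}(x)$, declaring the corollary immediate. Your extra remark on the boundary case $\ell=1$ via the convention $S_0(x)=1$ is a harmless (and slightly more careful) addition, not a different argument.
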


\begin{proposition}\label{nsc}
	Let $n,m\ge 2$ and $1\le \ell\le (m+1)/2$. Suppose that $G$ is an $n$-vertex graph whose eigenvalues are all simple.  Then the graph $G\circ P_{m}^{(\ell)} $ has only simple eigenvalues if and only if $\gcd(\ell,m+1)=1$.
\end{proposition}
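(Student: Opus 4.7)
The plan is to combine Corollary \ref{chagpk} with a structural fact about symmetric tridiagonal matrices. Writing
\begin{equation*}
\phi(G\circ P_m^{(\ell)};x)=\prod_{i=1}^n f_i(x), \qquad f_i(x):=S_m(x)-\lambda_i S_{\ell-1}(x)S_{m-\ell}(x),
\end{equation*}
the statement ``$G\circ P_m^{(\ell)}$ has only simple eigenvalues'' is equivalent to the conjunction of (a) each $f_i$ has only simple roots, and (b) $f_i$ and $f_j$ share no common root when $i\ne j$.

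I would dispose of (a) uniformly in $\ell$ and $m$. By Observation \ref{adjGP} together with the spectral decomposition of $A(G)$, if $v_i$ is an eigenvector of $A(G)$ for $\lambda_i$, then the subspace $\mathbb{R}^m\otimes \mathrm{span}(v_i)$ is invariant under $A(G\circ P_m^{(\ell)})$ and the restriction acts as the symmetric tridiagonal matrix $A(P_m)+\lambda_i D_\ell$, whose subdiagonal is identically $1$. A symmetric tridiagonal matrix with nonzero subdiagonal entries has only simple eigenvalues, because any eigenvector is determined up to scaling by its first coordinate via the three-term recurrence. Since $f_i$ is precisely the characteristic polynomial of this matrix, (a) holds without any condition on $\ell$ or $m$.

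The gcd condition enters through (b). Subtracting $f_i(x_0)=f_j(x_0)=0$ and using $\lambda_i\ne\lambda_j$ forces $S_{\ell-1}(x_0)S_{m-\ell}(x_0)=0$ and hence $S_m(x_0)=0$. Using the explicit roots $\{2\cos(j\pi/(k+1)):1\le j\le k\}$ of $S_k$, one checks that $S_m$ and $S_{\ell-1}$ share a root iff $\gcd(\ell,m+1)>1$, and similarly $S_m$ and $S_{m-\ell}$ share a root iff $\gcd(m-\ell+1,m+1)=\gcd(\ell,m+1)>1$. Thus, if $\gcd(\ell,m+1)=1$ then (b) holds and every eigenvalue of $G\circ P_m^{(\ell)}$ is simple; if instead $\gcd(\ell,m+1)>1$, then any common root $x_0$ of $S_m$ and $S_{\ell-1}S_{m-\ell}$ is in fact a root of \emph{every} $f_i$, producing an eigenvalue of multiplicity at least $n\ge 2$. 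The only nonstandard input in this plan is the gcd criterion for shared roots of $S_m$ and $S_k$, and that is elementary from the trigonometric description; I do not foresee a serious obstacle.
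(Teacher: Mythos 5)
Your proposal is correct, and it follows the paper's overall skeleton (factor $\phi(G\circ P_m^{(\ell)};x)$ via Corollary \ref{chagpk}, then analyze multiple roots within a factor and shared roots between factors) but replaces the paper's key lemma for the first part with a different one. The paper proves that each $f_\lambda(x)=S_m(x)-\lambda S_{\ell-1}(x)S_{m-\ell}(x)$ is squarefree by first showing (under $\gcd(\ell,m+1)=1$) that the roots of $S_m$ are disjoint from those of $S_{\ell-1}S_{m-\ell}$, deducing \emph{strict} interlacing of $\phi(P_m-\ell)$ inside $\phi(P_m)$, and then counting sign changes of $f_\lambda$ via the Intermediate Value Theorem. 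You instead observe that $f_{\lambda_i}$ is the characteristic polynomial of the unreduced symmetric tridiagonal matrix $A(P_m)+\lambda_i D_\ell$ (the restriction of $A(G\circ P_m^{(\ell)})$ to the invariant subspace $\mathbb{R}^m\otimes\mathrm{span}(v_i)$), and such matrices have simple spectrum since any eigenvector is propagated by the three-term recurrence from its first coordinate. This is a clean shortcut: it is unconditional in $\ell$ and $m$ (the paper's Claim 2 is only established after Claim 1, i.e., under the coprimality hypothesis), and it dispenses with the interlacing and sign-alternation bookkeeping. The remaining steps --- the observation that a common root of $f_{\lambda_i}$ and $f_{\lambda_j}$ forces $S_{\ell-1}(x_0)S_{m-\ell}(x_0)=S_m(x_0)=0$, the trigonometric gcd criterion for shared roots of $S_m$ with $S_{\ell-1}$ and with $S_{m-\ell}$ (using $\gcd(m+1-\ell,m+1)=\gcd(\ell,m+1)$), and the necessity argument exhibiting $2\cos(\pi/d)$ as a root of every factor when $d=\gcd(\ell,m+1)>1$, which needs $n\ge 2$ --- coincide with the paper's. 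In short: same architecture, but a more economical and slightly stronger treatment of the squarefreeness of each factor.
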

\begin{proof}
 The zeroes of  polynomials $S_m(x)$, $S_{\ell-1}(x)$ and $S_{m-\ell}(x)$ are $A:=\{a_p=2\cos\frac{p\pi}{m+1}\colon\,1\le p\le m \}$, and $B:=\{b_q=2\cos\frac{q\pi}{\ell}\colon\,1\le q\le \ell-1 \}$ and $C:=\{c_r=2\cos\frac{r\pi}{m+1-\ell}\colon\,1\le r\le m-\ell\}$, respectively. Let $d=\gcd(\ell,m+1)$. Suppose that $d>1$. Pick $p^*=(m+1)/d$ and $q^*=\ell/d$. Then clearly $a_{p^*}=b_{q^*}=2\cos\frac{\pi}{d}$, which, combining with the assumption $n\ge 2$,  implies that  $\phi(G\circ P_m^{(\ell)};x)$ has $2\cos \frac{\pi}{d}$ as a multiple root by Corollary \ref{chagpk}. This proves the necessity of  Proposition \ref{nsc}.
 
 Now we suppose that $d=1$. We prove the sufficiency by establishing the following two claims.
 
 \noindent\textbf{Claim 1}: The three sets $A$, $B$ and  $C$ are pairwise disjoint.
 
  Suppose to the contrary that $A\cap B$ is nonempty, that is, there exist some $p\in\{1,\ldots,m\}$ and $q\in \{1,\ldots,\ell-1\}$ such that  $a_p=b_q$. Noting that the function $\cos \theta$ is strongly monotonic in the interval $(0,\pi)$, we must have $\frac{p\pi}{m+1}=\frac{q\pi}{\ell}$, i.e., $p\ell=q(m+1)$. Thus, $(m+1)\mid p\ell$. But since we assume that $m+1$ and $\ell$  are coprime, we obtain that  $(m+1)\mid p$, contradicting the requirement that  $p\in \{1,\ldots,m\}$. This proves that $A\cap B=\emptyset$. Noting that $\gcd(m+1-\ell,m+1)=\gcd(m+1-\ell,\ell)=\gcd(\ell,m+1)=1$, similar arguments show that $A\cap C=B\cap C=\emptyset$ and hence Claim 1 follows.
  
  \noindent\textbf{Claim 2}: The polynomial $S_m(x)-\lambda S_{\ell-1}S_{m-\ell}(x)$ has only simple roots for any $\lambda\in \mathbb{R}$. 
  
  Write $T=B\cup C$ and label all the elements in decreasing order $t_1>t_2>\cdots>t_{m-1}$. Recall that $S_{\ell-1}S_{m-1}(x)$ is the characteristic polynomial of the vertex-deleted subgraph $P_m-\ell$. Thus, by the interlacing theorem and the established fact that $A\cap T=\emptyset$, we obtain 
  $a_1>t_1>a_2>t_2>\cdots>a_{m-1}>t_{m-1}>a_m$. Noting that each zero of $S_m(x)$ is simple, we find that the sequence $S_m(+\infty),S_m(t_1),S_m(t_2),\ldots,S_{m}(t_{m-1}),S_{m}(-\infty)$ must have alternating signs. Let $f(x)=S_m(x)-\lambda S_{\ell-1}(x)S_{m-\ell}(x)$. Since $S_{\ell-1}(t_i)S_{m-\ell}(t_i)=0$ and $m-1=\deg S_{\ell-1}(x)S_{m-\ell}(x)<\deg S_{m}(x)=m$, the sign of $f(x)$  coincides with that of $S_m(x)$ for each $x\in T\cup\{+\infty,-\infty\}$. It follows from the Intermediate Value Theorem  that $f(x)$ has at least one zero in each of the $m$ intervals $(-\infty,t_{m-1}), (t_{m-1},t_{m-2}),\ldots, (t_2,t_1)$ and  $(t_1,+\infty)$. Since $f(x)$ is a polynomial of degree $m$, we conclude that all zeroes of $f(x)$ are simple, completing the proof of Claim 2.
  
  Let $\lambda_1,\ldots,\lambda_n$ be the eigenvalues of $G$. By the assumption of this proposition, all these eigenvalues are pairwise different. By Claim 1, we find that $S_m(x)$ and $S_{\ell-1}(x)S_{m-\ell}(x)$ have no common roots and hence $\gcd(S_m(x),S_{\ell-1}(x)S_{m-\ell}(x))=1$. It follows that, for any two distinct $i,j\in \{1,2,\ldots,n\}$,  
  \begin{eqnarray*}
  &&\gcd(S_m(x)-\lambda_iS_{\ell-1}S_{m-\ell}(x),S_m(x)-\lambda_jS_{\ell-1}(x)S_{m-\ell}(x))\\
  	&=& \gcd(S_m(x)-\lambda_iS_{\ell-1}S_{m-\ell}(x),(\lambda_i-\lambda_j)S_{\ell-1}(x)S_{m-\ell}(x))\\
  		&=& \gcd(S_m(x),S_{\ell-1}(x)S_{m-\ell}(x))\\
  			&=& 1.
  \end{eqnarray*}
  Thus, the two polynomials $S_m(x)-\lambda_iS_{\ell-1}S_{m-\ell}(x)$ and $S_m(x)-\lambda_jS_{\ell-1}(x)S_{m-\ell}(x))$ have no common roots. This, combining with Claim 2, indicates that $\phi(G\circ P_m^{(\ell)})$ has only simple roots and hence completes the proof of Proposition \ref{nsc}.
  \end{proof}
  \begin{remark}\normalfont
  	For a graph $H$, Shan and Liu \cite{shan} call a vertex $v$ a Wronskian vertex  if $\gcd(\phi(H;x),\phi(H-v;x))=1$. In the same paper, they showed that $G\circ H^{(v)}$ has only simple eigenvalues if and only if $G$ has only simple eigenvalues and $v$ is a Wronskian vertex. Proposition \ref{nsc} gives a complete characterization of Wronskian vertices for any path graph $P_m$.
  	\end{remark}
  
\begin{corollary}
	Theorem \ref{main} holds when $G$ has a multiple root or $\gcd(\ell,m+1)>1$.
\end{corollary}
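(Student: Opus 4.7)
The plan is to reduce the corollary to the single fact that a graph whose adjacency matrix has a repeated eigenvalue must have a singular walk matrix. I will first record this general lemma: if $H$ has a multiple eigenvalue then the minimal polynomial of $A(H)$ has degree strictly less than $|V(H)|$, so that the columns $e, A(H)e, \ldots, A(H)^{|V(H)|-1}e$ of $W(H)$ are linearly dependent, giving $\det W(H) = 0$.

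Next I will treat the case where $\gcd(\ell,m+1)>1$ and $G$ has only simple eigenvalues. Here Proposition \ref{nsc} applies directly: it guarantees that $G \circ P_m^{(\ell)}$ possesses a multiple eigenvalue, so by the preceding lemma $\det W(G\circ P_m^{(\ell)})=0$, which matches the claimed ``otherwise'' branch of Theorem \ref{main}.

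The remaining case is $G$ having a multiple eigenvalue (regardless of the value of $\gcd(\ell,m+1)$). Here I note that the multiple-eigenvalue hypothesis forces $\det W(G)=0$, so the right-hand side of Theorem \ref{main} is zero in \emph{both} branches; the task is therefore to show $\det W(G\circ P_m^{(\ell)})=0$. I will read this off Corollary \ref{chagpk}: if $\lambda_i=\lambda_j$ for some $i\ne j$, then the factor $S_m(x)-\lambda_i S_{\ell-1}(x)S_{m-\ell}(x)$ of $\phi(G\circ P_m^{(\ell)};x)$ appears with multiplicity at least two, so any of its roots (which exist since this factor is a nonconstant polynomial of degree $m\ge 2$) is a multiple root of the characteristic polynomial of $G\circ P_m^{(\ell)}$. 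Invoking the lemma once more concludes $\det W(G\circ P_m^{(\ell)})=0$.

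I do not anticipate a real obstacle here: the content of the corollary is to reduce Theorem \ref{main} to the generic situation where $G$ has simple eigenvalues and $\gcd(\ell,m+1)=1$, and the two degenerate cases are handled uniformly by observing that both sides of the claimed determinant identity vanish. The only mild subtlety to mention is that one must check that the factor $S_m(x)-\lambda_i S_{\ell-1}(x)S_{m-\ell}(x)$ actually has a root in $\mathbb{R}$ (or at least in $\mathbb{C}$) for the repeated-factor argument to yield a repeated eigenvalue; since it is a nonconstant polynomial, this is automatic over $\mathbb{C}$ and so the multiple root of the product is honest.
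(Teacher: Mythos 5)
Your proposal is correct and follows essentially the same route as the paper: both rest on the fact that a multiple eigenvalue forces $\det W=0$, use the necessity direction of Proposition \ref{nsc} when $\gcd(\ell,m+1)>1$, and use the repeated factor in Corollary \ref{chagpk} when $G$ has a repeated eigenvalue. Your version is merely more explicit about the case split (which incidentally avoids invoking Proposition \ref{nsc} outside its stated simple-eigenvalue hypothesis) and about why both sides of the identity vanish.
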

\begin{proof}
	It is known that any graph $H$ with a multiple eigenvalue must satisfy $\det W(H)=0$. Suppose that $G$ has a multiple root or $\gcd(\ell,m+1)>1$. It follows from Corollary \ref{chagpk} and Proposition \ref{nsc} that $G\circ P_{m}^{(\ell)}$ must have a multiple root and hence $\det W(G\circ P_{m}^{(\ell)})=0$. This completes the proof of this corollary.
\end{proof}
\begin{definition}\label{eigmu}\normalfont{
Let $\lambda_1,\ldots,\lambda_n$ denote the eigenvalues of $G$ and $\xi_1,\ldots,\xi_n$ be the corresponding  eigenvectors. 	We use $\mu_i^{(j)}(j\in\{1,2,\ldots,m\})$ to denote all zeroes of $S_m(x)-\lambda_iS_{\ell-1}(x)S_{m-\ell}(x)$ for  $i\in\{1,2,\ldots,n\}$ and write 
\begin{equation}\label{af}
	\eta_i^{(j)}=\left(S_{\ell-1}(\mu_i^{(j)})\begin{bmatrix}
		S_{m-1}(\mu_i^{(j)})\\
		\vdots\\
		S_{m-\ell+1}(\mu_i^{(j)})\\
			S_{m-\ell}(\mu_i^{(j)})\\
		\vdots\\
		S_0(\mu_i^{(j)})
	\end{bmatrix}-S_m(\mu_i^{(j)})\begin{bmatrix}
		S_{\ell-2}(\mu_i^{(j)})\\\vdots\\S_0(\mu_i^{(j)})\\0\\\vdots\\0
	\end{bmatrix}\right)\otimes \xi_i.
\end{equation}
}
\end{definition}

The main result of this section is the following.
\begin{lemma}\label{eigA}
	Let $\tilde{A}$ denote the adjacency matrix of $G\circ P_m^{(\ell)}$. Then $\tilde{A}\eta_i^{(j)}=\mu_i^{(j)}\eta_i^{(j)}$ for $i\in\{1,2,\ldots,n\}$ and $j\in\{1,2,\ldots,m\}$.
\end{lemma}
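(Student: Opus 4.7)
\textbf{Proof proposal for Lemma \ref{eigA}.}
The plan is to exploit the Kronecker product structure given by Observation \ref{adjGP} to reduce the eigenvector identity on $G\circ P_m^{(\ell)}$ to a purely one-dimensional identity on $\mathbb{R}^m$. Writing $\eta_i^{(j)}=v\otimes \xi_i$ for the vector $v\in\mathbb{R}^m$ specified by (\ref{af}), and using the mixed-product property of $\otimes$ together with $A(G)\xi_i=\lambda_i\xi_i$, one gets
\begin{equation*}
\tilde{A}\,(v\otimes\xi_i)
=(A(P_m)v)\otimes\xi_i+\lambda_i(D_\ell v)\otimes\xi_i
=\bigl(A(P_m)v+\lambda_iD_\ell v\bigr)\otimes\xi_i.
\end{equation*}
Hence it suffices to prove that $A(P_m)v+\lambda_iD_\ell v=\mu\,v$, where $\mu:=\mu_i^{(j)}$.

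The second step is to rewrite the two column vectors in (\ref{af}) as a single closed form. Adopting the convention $S_{-1}(x)=S_{-2}(x)=\cdots=0$ (consistent with the recurrence $S_{k+1}=xS_k-S_{k-1}$), I would check that
\begin{equation*}
v_k=S_{\ell-1}(\mu)S_{m-k}(\mu)-S_m(\mu)S_{\ell-1-k}(\mu),\qquad k=1,\ldots,m,
\end{equation*}
agrees with (\ref{af}) in all three ranges $k<\ell$, $k=\ell$, $k>\ell$ (the second term drops out automatically once $\ell-1-k<0$). In addition I would verify the ``phantom boundary'' values $v_0=0$ and $v_{m+1}=0$, the former by direct cancellation and the latter using $\ell\le(m+1)/2<m$ so that both $S_{-1}$ and $S_{\ell-m-1}$ vanish.

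Once the uniform formula is in place, the recurrence $\mu S_k=S_{k+1}+S_{k-1}$ yields $v_{k-1}+v_{k+1}=\mu v_k$ for every $k\in\{1,\ldots,m\}$ \emph{except} $k=\ell$, since that is the unique index where the recurrence fails for the term $S_{\ell-1-k}$: at $k=\ell$ one would need $\mu S_{-1}=S_0+S_{-2}$, i.e.\ $0=1$. A careful computation at $k=\ell$ gives instead
\begin{equation*}
v_{\ell-1}+v_{\ell+1}-\mu v_\ell=-S_m(\mu),
\end{equation*}
with the $-S_m(\mu)$ being exactly the defect caused by this one failure of the three-term identity. This is where the defining equation of $\mu_i^{(j)}$ enters: since $(D_\ell v)$ is zero outside coordinate $\ell$ and equals $v_\ell=S_{\ell-1}(\mu)S_{m-\ell}(\mu)$ there, adding $\lambda_i(D_\ell v)_\ell$ produces $-S_m(\mu)+\lambda_iS_{\ell-1}(\mu)S_{m-\ell}(\mu)$, which vanishes by the choice of $\mu=\mu_i^{(j)}$. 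For the two tridiagonal endpoints $k=1$ and $k=m$ one checks them via the phantom boundary values $v_0=v_{m+1}=0$; the only delicacy is the coincidence case $\ell=1$, where $k=1$ itself is the exceptional index and the same cancellation using $S_m(\mu)=\lambda_iS_{m-1}(\mu)$ closes the argument.

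The main obstacle is purely bookkeeping: making the index conventions for $S_j$ with negative $j$ watertight so that the ``one-index-mismatch'' phenomenon at $k=\ell$ is localised and cleanly absorbed by the defining polynomial equation. No genuine combinatorial or spectral difficulty arises beyond this, because the Kronecker reduction in the first paragraph decouples the graph $G$ entirely from the calculation.
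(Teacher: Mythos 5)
Your proposal is correct and follows essentially the same route as the paper: reduce via the Kronecker structure of Observation \ref{adjGP} to a coordinate-wise identity on $\mathbb{R}^m$, verify it with the three-term recurrence $\mu S_k=S_{k+1}+S_{k-1}$, and absorb the single defect at coordinate $\ell$ using the defining equation $S_m(\mu)=\lambda_i S_{\ell-1}(\mu)S_{m-\ell}(\mu)$. The only differences are presentational (your unified formula $v_k=S_{\ell-1}(\mu)S_{m-k}(\mu)-S_m(\mu)S_{\ell-1-k}(\mu)$ with vanishing negative-index conventions versus the paper's explicit case split over $p=1$, $2\le p\le\ell-1$, $p=\ell$, $p>\ell$, and your inclusion of $\ell=1$, which the paper delegates to \cite{wym2024}); just note that setting $S_{-2}=0$ is a bookkeeping convention rather than one ``consistent with the recurrence,'' which is exactly why the identity fails at $k=\ell$ as you correctly compute.
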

\begin{proof}
We fix $i$ and $j$ and write $\lambda=\lambda_i$, $\xi=\xi_i$, $\mu=\mu_i^{(j)}$, $\eta=\eta_i^{(j)}$ and $s_k=S_k(\mu_i^{(j)})$  for $k=-1,0,\ldots,m-1$, where we make the convention that $S_{-1}(x)=0$ and hence $s_{-1}=0$. According to \cite[Lemma 2.3]{wym2024},  Lemma \ref{eigA} is true for the special $\ell=1$. Thus, we may assume $\ell\ge 2$ in the following argument. By Observation  \ref{adjGP} and some basic properties of the Kronecker product, we obtain
	\begin{eqnarray}\label{Aeta}
\tilde{A}\eta &=&(A(P_m)\otimes I_n+D_\ell\otimes A(G))\left(\left(s_{\ell-1}\begin{bmatrix}
	s_{m-1}\\
	\vdots\\
	s_{m-\ell+1}\\
	s_{m-\ell}\\
	s_{m-\ell-1}\\
	\vdots\\
	s_0
\end{bmatrix}-s_m\begin{bmatrix}
s_{\ell-2}\\
\vdots\\
s_0\\
s_{-1}\\
0\\
\vdots\\
0
\end{bmatrix}\right)\otimes \xi\right)\nonumber\\
&=&\left(s_{\ell-1}A(P_m)\begin{bmatrix}
	s_{m-1}\\
	\vdots\\
	s_{m-\ell+1}\\
	s_{m-\ell}\\
	s_{m-\ell-1}\\
	\vdots\\
	s_0
\end{bmatrix}-s_mA(P_m)\begin{bmatrix}
s_{\ell-2}\\
\vdots\\
s_0\\
s_{-1}\\
0\\
\vdots\\
0
\end{bmatrix}+\begin{bmatrix}
0\\
\vdots\\
0\\
\lambda s_{\ell-1}s_{m-\ell}\\
0\\
\vdots\\
0
\end{bmatrix}\right)\otimes \xi \nonumber\\
&= &\left(s_{\ell-1}\begin{bmatrix}
s_{m-2}\\
s_{m-3}+s_{m-1}\\
\vdots\\
s_{m-\ell}+s_{m-\ell+2}\\
s_{m-\ell-1}+s_{m-\ell+1}\\
s_{m-\ell-2}+s_{m-\ell}\\
\vdots\\
s_0+s_2\\
s_{-1}+s_1
\end{bmatrix}-s_m\begin{bmatrix}
s_{\ell-3}\\
s_{\ell-4}+s_{\ell-2}\\
\vdots\\
s_{-1}+s_{1}\\
s_0\\
0\\
\vdots\\
0\\
0
\end{bmatrix}+\begin{bmatrix}
0\\
0\\
\vdots\\
0\\
\lambda s_{\ell-1}s_{m-\ell}\\
0\\
\vdots\\
0\\
0
\end{bmatrix}\right)\otimes \xi\label{bf}.
\end{eqnarray}
We denote the first factor in Eq.~\eqref{bf} by $(b_1,b_2,\ldots,b_m)^\T$ and the corresponding factor in Eq.~\eqref{af} by $(a_1,a_2,\ldots,a_m)^\T$. Noting that $s_{q}+s_{q+2}=\mu s_{q+1}$ for $q\ge -1$ by the three-term recurrence of $\{S_q(x)\}$, we find that 
\begin{equation}\label{bp}
	b_p=\begin{cases}
		s_{\ell-1}s_{m-2}-s_ms_{\ell-3},&p=1\\
		\mu s_{l-1}s_{m-p}-\mu s_{m}s_{\ell-p-1},&2\le p\le \ell-1\\
		\mu s_{\ell-1}s_{m-\ell}-s_ms_0+\lambda s_{\ell-1}s_{m-\ell},&p=\ell\\
		\mu s_{\ell-1}s_{m-p},&\ell+1\le p\le m.
\end{cases}\end{equation}
From Eq.~\eqref{af} and the fact that $s_{-1}=0$, we see that
\begin{equation}\label{ap}
	a_p=\begin{cases}
		s_{\ell-1}s_{m-p}-s_ms_{\ell-p-1},&1\le p\le \ell\\
		s_{\ell-1}s_{m-p},&\ell+1\le p\le m.
		\end{cases}
\end{equation}
To show $\tilde{A}\eta=\mu\cdot \eta$, it suffices to show $b_p=\mu a_p$ for each $p\in\{1,\ldots,m\}$.  Comparing Eq.~\eqref{bp} and Eq.~\eqref{ap}, we only need to check the equality  $b_p=\mu a_p$ for two cases: $p=1$ and $p=\ell$.

Direct calculation shows that 
\begin{eqnarray*}
	b_1&=&s_{\ell-1}s_{m-2}-s_{m}s_{\ell-3}\\&=&s_{\ell-1}(s_{m-2}+s_m)-s_m(s_{\ell-3}+s_{\ell-1})\\
	&=&\mu s_{\ell-1}s_{m-1}-\mu s_ms_{l-2}\\
	&=&\mu a_1.
\end{eqnarray*}
It remains to show $b_p=\mu a_{p}$ for $p=\ell$. Recalling that $\mu$ is a zero of $S_m(x)-\lambda S_{\ell-1}(x)S_{m-\ell}(x)$, i.e., $s_m=\lambda s_{\ell-1}s_{m-\ell}$, we obtain
\begin{equation*}
	b_\ell=\mu s_{\ell-1}s_{m-\ell}-s_ms_0+\lambda s_{\ell-1}s_{m-\ell}=\mu s_{\ell-1}s_{m-\ell}.
\end{equation*}
Since $s_{-1}=0$, we see from Eq.~\eqref{ap} that  $a_\ell=s_{\ell-1}s_{m-p}$. Thus	$b_\ell=\mu a_\ell$, as desired. This completes the proof of Lemma \ref{eigA}.
\end{proof}
\section{Computing $\det W(G\circ P_m^{(\ell)})$}\label{cdw}
In this section, we assume  that all eigenvalues $\lambda_1,\ldots,\lambda_n$ are simple. Also assume $2\le \ell\le  (m+1)/2$ and $\gcd(\ell,m)=1$.

 Our basic tool is the  following formula to compute $\det W(G)$   using the  eigenvalues and eigenvectors of $G$. 
\begin{lemma}[\cite{mao2015}]\label{basicW}
	Let $\lambda_i$ be the eigenvalues of $G$ with  eigenvector $\xi_i$ for $i=1,2,\ldots,n$. Then
	$$\det W(G)= \frac{\prod_{1\le i_1< i_2\le n}(\lambda_{i_2}-\lambda_{i_1})\prod_{1\le i\le n}(e_n^\T \xi_i)}{\det[\xi_1,\xi_2,\ldots,\xi_n]}.$$
\end{lemma}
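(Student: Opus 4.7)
The plan is to diagonalize $A(G)$ and then factor $W(G)$ into a product of matrices whose determinants are computable directly. Since all eigenvalues are simple, one writes $A=P\Lambda P^{-1}$ with $P=[\xi_1,\ldots,\xi_n]$ and $\Lambda=\diag(\lambda_1,\ldots,\lambda_n)$. Setting $y=P^{-1}e$, the $k$-th column of $W(G)$ becomes $A^{k-1}e=P\Lambda^{k-1}y$, so
\begin{equation*}
W(G) = P \cdot \diag(y_1,\ldots,y_n) \cdot V,
\end{equation*}
where $V$ is the Vandermonde matrix with $V_{ij}=\lambda_i^{j-1}$. Multiplicativity of the determinant together with the classical Vandermonde formula then yields
\begin{equation*}
\det W(G) = \det P \cdot \prod_{i=1}^n y_i \cdot \prod_{1\le i_1<i_2\le n}(\lambda_{i_2}-\lambda_{i_1}).
\end{equation*}

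Next I would express each $y_i$ in terms of $e^\T\xi_i$. Because $A$ is real symmetric with pairwise distinct eigenvalues, the eigenvectors $\xi_1,\ldots,\xi_n$ are automatically pairwise orthogonal, hence $P^\T P=\diag(\|\xi_1\|^2,\ldots,\|\xi_n\|^2)$. From $Py=e$ one reads off $\|\xi_i\|^2 y_i=\xi_i^\T e$, i.e.\ $y_i=e^\T\xi_i/\|\xi_i\|^2$. Taking the product over $i$ and using $(\det P)^2=\det(P^\T P)=\prod_i\|\xi_i\|^2$ gives $\prod_i y_i = \prod_i(e^\T\xi_i)/(\det P)^2$. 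Substituting into the preceding display and cancelling one factor of $\det P$ produces precisely the expression stated in Lemma \ref{basicW} (with $e_n$ interpreted as the all-ones vector $e$).

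No step in this argument is genuinely hard; the only nontrivial input is the orthogonality of eigenvectors belonging to distinct eigenvalues of a symmetric matrix, which is what lets one pass cleanly from the algebraic identity $Py=e$ to the explicit formula $y_i=e^\T\xi_i/\|\xi_i\|^2$. As a sanity check, I would verify that the right-hand side of the lemma is invariant under each rescaling $\xi_i\mapsto c_i\xi_i$: the numerator acquires a factor $\prod_i c_i$ from $\prod_i(e^\T\xi_i)$, and by multilinearity the denominator $\det[\xi_1,\ldots,\xi_n]$ acquires the same factor, so the two cancel, consistent with the fact that each eigenvector is defined only up to a nonzero scalar.
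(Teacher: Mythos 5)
Your proof is correct: the factorization $W(G)=P\,\diag(y_1,\ldots,y_n)\,V$ with $y=P^{-1}e$, combined with the orthogonality of eigenvectors of a symmetric matrix to get $y_i=e^\T\xi_i/\|\xi_i\|^2$ and $(\det P)^2=\prod_i\|\xi_i\|^2$, yields exactly the stated formula. The paper does not reprove this lemma (it is quoted from the cited reference), and your argument is essentially the standard derivation given there, so nothing further is needed.
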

Let $\Omega=\{(i,j)\colon\,1\le i\le n \text{~and~} 1\le j\le m\}$ with the colexicographical order: $(i_1,j_1)<(i_2,j_2) $ if either $j_1<j_2$, or $j_1=j_2$ and $i_1<i_2$. By Proposition \ref{nsc}, the graph $G\circ P_m^{(\ell)}$ has only simple eigenvalues. The following formula of $\det W(G\circ P_m^{(\ell)})$ is an immediate consequence of Lemma \ref{eigA}  and Lemma \ref{basicW}.
\begin{corollary}\label{dwt3}
	\begin{equation*}
		\det W(G\circ P_m^{(\ell)})= \frac{\prod_{(i_1,j_1)<(i_2,j_2)}(\mu_{i_2}^{(j_2)}-\mu_{i_1}^{(j_1)})\prod_{(i,j)\in \Omega}(e_{mn}^\T \eta_i^{(j)})}{\det[\eta_1^{(1)},\ldots,\eta_n^{(1)};\ldots;\eta_1^{(m)},\ldots,\eta_n^{(m)}]}.
	\end{equation*}
\end{corollary}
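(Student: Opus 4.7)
The plan is to apply Lemma~\ref{basicW} directly to the graph $G\circ P_m^{(\ell)}$, using as the complete list of eigen-pairs the $mn$ pairs $(\mu_i^{(j)},\eta_i^{(j)})$ indexed by $(i,j)\in\Omega$. Nothing more sophisticated is needed; the work is entirely bookkeeping, and the three ingredients are already in place.

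First, I would verify that we actually have a complete eigensystem. By Corollary~\ref{chagpk},
\begin{equation*}
\phi(G\circ P_m^{(\ell)};x)=\prod_{i=1}^{n}\bigl(S_m(x)-\lambda_i S_{\ell-1}(x)S_{m-\ell}(x)\bigr),
\end{equation*}
so the $mn$ numbers $\mu_i^{(j)}$ of Definition~\ref{eigmu} are precisely the eigenvalues of $\tilde{A}=A(G\circ P_m^{(\ell)})$, counted with multiplicity. Under the standing hypotheses of this section (simple eigenvalues of $G$ and $\gcd(\ell,m+1)=1$), Proposition~\ref{nsc} guarantees that these $mn$ eigenvalues are pairwise distinct, so each eigenspace is one-dimensional. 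Lemma~\ref{eigA} then certifies that $\eta_i^{(j)}$ is an eigenvector of $\tilde{A}$ with eigenvalue $\mu_i^{(j)}$, and (by simplicity) each $\eta_i^{(j)}$ is nonzero; hence $\{\eta_i^{(j)}:(i,j)\in\Omega\}$ is a basis of eigenvectors.

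Next, I would order the eigen-pairs according to the colexicographical order on $\Omega$ and substitute into the formula of Lemma~\ref{basicW} applied to the $mn$-vertex graph $G\circ P_m^{(\ell)}$. The Vandermonde factor becomes
\begin{equation*}
\prod_{1\le k_1<k_2\le mn}(\lambda_{k_2}-\lambda_{k_1})=\prod_{(i_1,j_1)<(i_2,j_2)}\bigl(\mu_{i_2}^{(j_2)}-\mu_{i_1}^{(j_1)}\bigr),
\end{equation*}
the scalar product factor becomes $\prod_{(i,j)\in\Omega}(e_{mn}^\T\eta_i^{(j)})$, and the denominator is the determinant of the $mn\times mn$ matrix whose columns are the $\eta_i^{(j)}$ listed in the same colex order, i.e.\ $\det[\eta_1^{(1)},\ldots,\eta_n^{(1)};\ldots;\eta_1^{(m)},\ldots,\eta_n^{(m)}]$. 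This is exactly the claimed identity.

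There is no real obstacle: once simplicity of the spectrum of $\tilde A$ is invoked so that Lemma~\ref{basicW} applies and the denominator is nonzero, the corollary is a direct substitution. The only subtlety worth flagging in the write-up is that the ordering chosen for the eigen-pairs must match the ordering in the numerator product and in the column-ordering of the denominator matrix; this is the role of the colex order on $\Omega$.
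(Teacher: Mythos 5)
Your proposal is correct and matches the paper's route exactly: the paper also obtains this corollary as an immediate substitution of the eigenpairs $(\mu_i^{(j)},\eta_i^{(j)})$ from Lemma~\ref{eigA} (with simplicity supplied by Proposition~\ref{nsc}) into the formula of Lemma~\ref{basicW}. The only quibble is that ``by simplicity each $\eta_i^{(j)}$ is nonzero'' is not literally a justification---a zero vector satisfies the eigen-equation trivially---but nonvanishing follows at once from Eq.~\eqref{af}, whose last block is $S_{\ell-1}(\mu_i^{(j)})\xi_i\neq 0$ since $S_{\ell-1}$ and $S_m-\lambda_iS_{\ell-1}S_{m-\ell}$ share no roots when $\gcd(\ell,m+1)=1$.
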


\begin{definition}\label{dS}
	$S(x)=S_{\ell-1}(x)\sum_{k=0}^{m-1}S_k(x)-S_m(x)\sum_{k=0}^{\ell-2}S_k(x)$.
\end{definition}
The main aim of this section is to establish the following connection between the two determinants $\det W(G\circ P_{m}^{(\ell)})$ and $\det W(G)$.
\begin{proposition}\label{dwt}
	\begin{equation*}
	\det W(G\circ P_m^{(\ell)})=\pm \left(\prod_{1\le i_1<i_2\le n}\prod_{j_2=1}^{m}S_{\ell-1}(\mu_{i_2}^{(j_2)})S_{m-\ell}(\mu_{i_2}^{(j_2)})\right)\left(\prod_{i=1}^n\prod_{j=1}^m S(\mu_i^{(j)})\right)\left(\det W(G)\right)^m.
	\end{equation*}
\end{proposition}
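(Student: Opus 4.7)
The starting point is Corollary~\ref{dwt3}, which expresses $\det W(G\circ P_m^{(\ell)})$ as $V\cdot E/D$, where $V=\prod_{(i_1,j_1)<(i_2,j_2)}(\mu_{i_2}^{(j_2)}-\mu_{i_1}^{(j_1)})$ is the mixed Vandermonde, $E=\prod_{(i,j)\in\Omega}e_{mn}^\T \eta_i^{(j)}$, and $D=\det[\eta_1^{(1)}|\cdots|\eta_n^{(m)}]$. The plan is to evaluate $V$, $E$ and $D$ separately so as to extract the three target factors on the right-hand side of the proposition together with a copy of $(\det W(G))^m$ coming from Lemma~\ref{basicW} applied to $G$.

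By Definition~\ref{eigmu} each $\eta_i^{(j)}$ factors as $v(\mu_i^{(j)})\otimes\xi_i$, where $v(\mu)$ denotes the $m$-vector appearing in parentheses in Eq.~\eqref{af}. Since $e_{mn}=e_m\otimes e_n$, one has $e_{mn}^\T \eta_i^{(j)}=(e_m^\T v(\mu_i^{(j)}))(e_n^\T\xi_i)$, and summing the $m$ entries of $v(\mu)$ reproduces exactly the polynomial $S(\mu)$ of Definition~\ref{dS}; hence $E=\prod_{(i,j)\in\Omega}S(\mu_i^{(j)})\cdot\prod_{i=1}^n(e_n^\T\xi_i)^m$. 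For $D$, a column permutation that regroups the columns by $i$ (contributing only a sign depending on $m,n$) turns the eigenvector matrix into $[V_1\otimes\xi_1\,|\,\cdots\,|\,V_n\otimes\xi_n]$ with $V_i=[v(\mu_i^{(1)}),\ldots,v(\mu_i^{(m)})]$. The identity $V_i\otimes\xi_i=(I_m\otimes\xi_i)V_i$, combined with the observation that $[I_m\otimes\xi_1|\cdots|I_m\otimes\xi_n]$ is a column permutation of $I_m\otimes X$ with $X=[\xi_1,\ldots,\xi_n]$, then yields $D=\pm(\det X)^m\prod_{i=1}^n\det V_i$.

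The key subclaim is therefore $\det V_i=\pm\prod_{j_1<j_2}(\mu_i^{(j_2)}-\mu_i^{(j_1)})$. I would prove it by expanding each component $v_p(\mu)$ in the Chebyshev basis $\{S_0,\ldots,S_{m-1}\}$ using the product identity $S_aS_b=\sum_{k=0}^{\min(a,b)}S_{a+b-2k}$. The resulting coefficient matrix $T$ has $\{0,1\}$-entries; one then checks that elementary row operations, matching the ``upper wave'' ($p\le \ell$) with the ``lower wave'' ($p>\ell$) of overlapping degrees, reduce $T$ to a signed permutation matrix, so that $\det T=\pm 1$. Since $V_i=T\cdot[S_k(\mu_i^{(j)})]_{k,j}$ and the right factor has determinant $\prod_{j_1<j_2}(\mu_i^{(j_2)}-\mu_i^{(j_1)})$ (monicity of the $S_k$'s reduces it to an ordinary Vandermonde), the subclaim follows.

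Once $\det V_i$ is known, I would split $V$ into the same-$i$ factor $\prod_i\prod_{j_1<j_2}(\mu_i^{(j_2)}-\mu_i^{(j_1)})$ — which cancels with $\prod_i\det V_i$ in the denominator — and the different-$i$ factor. For the latter, the identity $f_i(x)-f_{i'}(x)=(\lambda_{i'}-\lambda_i)S_{\ell-1}(x)S_{m-\ell}(x)$, with $f_i(x)=\prod_j(x-\mu_i^{(j)})$, gives $f_i(\mu_{i'}^{(j)})=(\lambda_{i'}-\lambda_i)S_{\ell-1}(\mu_{i'}^{(j)})S_{m-\ell}(\mu_{i'}^{(j)})$; organising the resulting products over unordered pairs $\{i,i'\}$, and carefully tracking the sign from colex ordering, produces $\pm\prod_{i_1<i_2}\prod_j S_{\ell-1}(\mu_{i_2}^{(j)})S_{m-\ell}(\mu_{i_2}^{(j)})\cdot\prod_{i_1<i_2}(\lambda_{i_2}-\lambda_{i_1})^m$. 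Finally, Lemma~\ref{basicW} applied to $G$ gives $\prod_i(e_n^\T\xi_i)^m/(\det X)^m=(\det W(G))^m/\prod_{i_1<i_2}(\lambda_{i_2}-\lambda_{i_1})^m$; substituting this, the $(\lambda_{i_2}-\lambda_{i_1})^m$ factors cancel exactly and Proposition~\ref{dwt} emerges. The step I expect to require the most care is the subclaim on $\det V_i$: since $v_1,\ldots,v_m$ do not have pairwise distinct degrees, $V_i$ is not directly Vandermonde-triangular, and exhibiting the row operations that telescope the two overlapping waves of Chebyshev expansions into a signed permutation of $\{S_0,\ldots,S_{m-1}\}$ is the only non-routine computation in the argument.
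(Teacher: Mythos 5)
Your decomposition is exactly the paper's: Corollary \ref{dwt3} gives the quotient $V\cdot E/D$; your evaluation of $E$ is Lemma \ref{exm}; your Kronecker/permutation computation of $D$ is Lemma \ref{deteta1} (your $V_i$ is the matrix $(f_k(\mu_i^{(j)}))_{m\times m}$ with $f_k$ as in Eq.~\eqref{fk}); your splitting of $V$ into same-$i$ and cross-$i$ factors, with the cross terms evaluated through $S_m(\mu_{i_2}^{(j_2)})-\lambda_{i_1}S_{\ell-1}(\mu_{i_2}^{(j_2)})S_{m-\ell}(\mu_{i_2}^{(j_2)})=\prod_{j_1}(\mu_{i_2}^{(j_2)}-\mu_{i_1}^{(j_1)})$, reproduces Lemmas \ref{Vanmu} and \ref{ppmu}; and the final cancellation against Lemma \ref{basicW} is the paper's concluding computation verbatim. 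Even the expansion of the $f_k$ in the basis $\{S_0,\dots,S_{m-1}\}$ via $S_pS_q=\sum_{i\in\mathcal{I}(p-q,p+q)}S_i$ coincides with Claims 1 and 2 of Lemma \ref{cons}.

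The single point of divergence is how you certify that the $(0,1)$ coefficient matrix has determinant $\pm1$, and this is precisely where your argument is not yet a proof. The rows indexed by $\ell\le k\le m+1-\ell$ are supported on $\ell$ consecutive same-parity indices that slide by two as $k$ increases by two, so consecutive rows within a parity class differ by $e_a-e_b$ rather than by a single basis vector; the assertion that matching the two ``waves'' telescopes the matrix to a signed permutation is plausible and checks out in small cases, but it requires a genuine induction across the three regimes of Eq.~\eqref{Ik}, which you have not supplied. The paper sidesteps this entirely: after the column permutation that sorts indices by parity, every row of $B$ has its ones consecutive, so $B$ is totally unimodular by the Fulkerson--Gross theorem and $\det B\in\{0,\pm1\}$; nonvanishing is then imported from the simplicity of the spectrum of $G\circ P_m^{(\ell)}$ (Proposition \ref{nsc}), which forces $\det[\eta_1^{(1)},\dots,\eta_n^{(m)}]\neq0$ and hence $\det B\neq0$. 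If you keep your direct route you must actually exhibit the row reduction; the consecutive-ones observation is the idea you are missing. (A completed direct reduction would, in compensation, prove nonsingularity of $V_i$ without appealing to Proposition \ref{nsc}.)
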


\begin{lemma}\label{ppmu}
	\begin{equation*}
	\prod_{j_2=1}^{m}\prod_{j_1=1}^{m}\left(\mu_{i_2}^{(j_2)}-\mu_{i_1}^{(j_1)}\right)=(\lambda_{i_2}-\lambda_{i_1})^m\prod_{j_2=1}^{m}S_{\ell-1}(\mu_{i_2}^{(j_2)})S_{m-\ell}(\mu_{i_2}^{(j_2)}).
	\end{equation*}
\end{lemma}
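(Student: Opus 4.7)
The plan is to use the fact that $\mu_{i_1}^{(1)}, \ldots, \mu_{i_1}^{(m)}$ are, by definition, precisely the roots of the polynomial
$$f_{i_1}(x) := S_m(x) - \lambda_{i_1} S_{\ell-1}(x) S_{m-\ell}(x),$$
which is monic of degree $m$ (since $S_m$ has degree $m$ while $S_{\ell-1}S_{m-\ell}$ has degree $m-1$). Hence
$$f_{i_1}(x) = \prod_{j_1=1}^{m}\bigl(x - \mu_{i_1}^{(j_1)}\bigr).$$

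First I would evaluate $f_{i_1}$ at $x = \mu_{i_2}^{(j_2)}$, obtaining
$$\prod_{j_1=1}^{m}\bigl(\mu_{i_2}^{(j_2)} - \mu_{i_1}^{(j_1)}\bigr) = S_m\bigl(\mu_{i_2}^{(j_2)}\bigr) - \lambda_{i_1} S_{\ell-1}\bigl(\mu_{i_2}^{(j_2)}\bigr)S_{m-\ell}\bigl(\mu_{i_2}^{(j_2)}\bigr).$$
Next I would use the defining relation for $\mu_{i_2}^{(j_2)}$, namely $f_{i_2}(\mu_{i_2}^{(j_2)}) = 0$, which reads
$$S_m\bigl(\mu_{i_2}^{(j_2)}\bigr) = \lambda_{i_2}\, S_{\ell-1}\bigl(\mu_{i_2}^{(j_2)}\bigr)S_{m-\ell}\bigl(\mu_{i_2}^{(j_2)}\bigr).$$
Substituting this into the previous identity collapses the right-hand side to
$$\bigl(\lambda_{i_2}-\lambda_{i_1}\bigr)\, S_{\ell-1}\bigl(\mu_{i_2}^{(j_2)}\bigr)S_{m-\ell}\bigl(\mu_{i_2}^{(j_2)}\bigr).$$

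Finally, taking the product over $j_2 \in \{1,\ldots,m\}$ yields
$$\prod_{j_2=1}^{m}\prod_{j_1=1}^{m}\bigl(\mu_{i_2}^{(j_2)}-\mu_{i_1}^{(j_1)}\bigr) = (\lambda_{i_2}-\lambda_{i_1})^{m}\prod_{j_2=1}^{m}S_{\ell-1}\bigl(\mu_{i_2}^{(j_2)}\bigr)S_{m-\ell}\bigl(\mu_{i_2}^{(j_2)}\bigr),$$
which is the asserted identity. There is no real obstacle: the whole argument is essentially a single substitution, relying only on the fact that $f_{i_1}$ is monic of degree $m$ with roots $\mu_{i_1}^{(j_1)}$ and on the vanishing of $f_{i_2}$ at each $\mu_{i_2}^{(j_2)}$. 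The only point worth double-checking is the monicity of $f_{i_1}$, i.e. the degree comparison $\deg(S_{\ell-1}S_{m-\ell}) = m-1 < m = \deg S_m$, which holds since each $S_k$ is monic of degree $k$.
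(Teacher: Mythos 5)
Your proof is correct and follows essentially the same route as the paper: factor the monic polynomial $S_m(x)-\lambda_{i_1}S_{\ell-1}(x)S_{m-\ell}(x)$ over its roots $\mu_{i_1}^{(j_1)}$, evaluate at $\mu_{i_2}^{(j_2)}$, substitute $S_m(\mu_{i_2}^{(j_2)})=\lambda_{i_2}S_{\ell-1}(\mu_{i_2}^{(j_2)})S_{m-\ell}(\mu_{i_2}^{(j_2)})$, and take the product over $j_2$. No differences worth noting.
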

\begin{proof}
Note that $S_m(x)-\lambda_iS_{\ell-1}(x)S_{m-\ell}(x)$ is monic and has roots $\mu_i^{(1)},\ldots,\mu_i^{(m)}$, we have
$S_m(x)-\lambda_iS_{\ell-1}(x)S_{m-\ell}(x)=\prod_{j_1=1}^m(x-\mu_i^{(j_1)}).$ Thus,
	\begin{equation*}
S_m(\mu_{i_2}^{(j_2)})-\lambda_{i_1}S_{\ell-1}(\mu_{i_2}^{(j_2)})S_{m-\ell}(\mu_{i_2}^{(j_2)})=\prod_{j_1=1}^{m}\left(\mu_{i_2}^{(j_2)}-\mu_{i_1}^{(j_1)}\right).
\end{equation*}
This, together with the fact that $S_m(\mu_{i_2}^{(j_2)})-\lambda_{i_2}S_{\ell-1}(\mu_{i_2}^{(j_2)})S_{m-\ell}(\mu_{i_2}^{(j_2)})=0$ implies
\begin{eqnarray}
	\prod_{j_2=1}^{m}\prod_{j_1=1}^{m}\left(\mu_{i_2}^{(j_2)}-\mu_{i_1}^{(j_1)}\right)&=&\prod_{j_2=1}^{m}\left(S_m(\mu_{i_2}^{(j_2)})-\lambda_{i_1}S_{\ell-1}(\mu_{i_2}^{(j_2)})S_{m-\ell}(\mu_{i_2}^{(j_2)})\right)\nonumber\\
&=&(\lambda_{i_2}-\lambda_{i_1})^m\prod_{j_2=1}^{m}S_{\ell-1}(\mu_{i_2}^{(j_2)})S_{m-\ell}(\mu_{i_2}^{(j_2)})\nonumber.
\end{eqnarray}
 This completes the proof.
\end{proof}
\begin{lemma}\label{Vanmu}
	\begin{equation*}
\prod_{(i_1,j_1)<(i_2,j_2)}(\mu_{i_2}^{(j_2)}-\mu_{i_1}^{(j_1)})=\pm	\left(\prod_{i=1}^{n}\prod_{1\le j_1<j_2\le m}\left(\mu_i^{(j_2)}-\mu_i^{(j_1)}\right)\right)\left(\prod_{1\le i_1< i_2\le n}	\prod_{j_2=1}^{m}\prod_{j_1=1}^{m}\left(\mu_{i_2}^{(j_2)}-\mu_{i_1}^{(j_1)}\right)\right).
	\end{equation*}
\end{lemma}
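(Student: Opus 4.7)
The plan is to observe that, up to sign, both sides of the claimed identity are the product of $(\mu_{i_2}^{(j_2)}-\mu_{i_1}^{(j_1)})$ over exactly the same collection of unordered pairs $\{(i_1,j_1),(i_2,j_2)\}$ of distinct elements of $\Omega$. The only discrepancy lies in the convention used to order each pair in each of its occurrences. Since Proposition~\ref{nsc} guarantees that all $\mu_i^{(j)}$ are pairwise distinct, no factor vanishes and the two sides are at worst related by a sign.

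First I would split the colex-indexed product on the left-hand side according to whether $i_1=i_2$ or $i_1\ne i_2$. If $i_1=i_2=i$, the colex rule forces $j_1<j_2$, so these contributions assemble exactly into
\[
\prod_{i=1}^{n}\prod_{1\le j_1<j_2\le m}\bigl(\mu_i^{(j_2)}-\mu_i^{(j_1)}\bigr),
\]
matching the first factor on the right-hand side.

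Next, I would compare the remaining colex factors (those with $i_1\ne i_2$) against the double product $\prod_{1\le i_1<i_2\le n}\prod_{j_1,j_2=1}^{m}(\mu_{i_2}^{(j_2)}-\mu_{i_1}^{(j_1)})$. For any unordered pair of distinct first indices $\{a,b\}$ with $a<b$ and any $j',j''\in\{1,\ldots,m\}$, each side contains exactly one occurrence of a factor involving $\mu_a^{(j')}$ and $\mu_b^{(j'')}$. On the right this factor is always $\mu_b^{(j'')}-\mu_a^{(j')}$, while on the left its sign is determined by the colex comparison of $(a,j')$ and $(b,j'')$. A short case analysis on the three possibilities $j'<j''$, $j'=j''$, and $j'>j''$ shows that the signs agree in the first two cases and differ in the third.

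Since the lemma only asserts equality up to $\pm$, no further tracking is needed; in fact one can read off that the total sign discrepancy is $(-1)^{\binom{n}{2}\binom{m}{2}}$, but this is inessential. The argument is purely combinatorial, and the only minor obstacle is bookkeeping the colex comparisons in the three cases above.
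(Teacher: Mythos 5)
Your proposal is correct and follows essentially the same route as the paper: split the colexicographic product into the $i_1=i_2$ part (which directly gives the first factor) and the $i_1\neq i_2$ part, then match the latter factor-by-factor against the full double product over $j_1,j_2$ for each pair $i_1<i_2$, absorbing the orientation mismatches into an overall sign. Your sign count $(-1)^{\binom{n}{2}\binom{m}{2}}$ agrees with the paper's tally of reversed pairs, so nothing is missing.
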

\begin{proof}
	We have 
	\begin{equation*}\label{pmumu}
		\prod_{(i_1,j_1)<(i_2,j_2)}(\mu_{i_2}^{(j_2)}-\mu_{i_1}^{(j_1)})=\left(\prod_{i=1}^{n}\prod_{1\le j_1<j_2\le m}\left(\mu_{i}^{(j_2)}-\mu_{i}^{(j_1)}\right)\right)\left(\prod_{i_1\neq i_2}\prod_{(i_1,j_1)<(i_2,j_2)}\left(\mu_{i_2}^{(j_2)}-\mu_{i_1}^{(j_1)}\right)\right).
	\end{equation*}
	The second factor can be regrouped as 
	\begin{eqnarray}\label{sf}
		&&\prod_{1\le i_1< i_2\le n}\left(\prod_{(i_1,j_1)<(i_2,j_2)}\left(\mu_{i_2}^{(j_2)}-\mu_{i_1}^{(j_1)}\right)\right)\left(\prod_{(i_2,j_2)<(i_1,j_1)}\left(\mu_{i_1}^{(j_1)}-\mu_{i_2}^{(j_2)}\right)\right)\nonumber\\
		&=&\prod_{1\le i_1< i_2\le n}\left(\prod_{j_2=1}^{m}\prod_{j_1=1}^{m}\left(\mu_{i_2}^{(j_2)}-\mu_{i_1}^{(j_1)}\right)\right)\left(\prod_{(i_2,j_2)<(i_1,j_1)}(-1)\right)\nonumber\\
		&=&\pm \prod_{1\le i_1< i_2\le n}\prod_{j_2=1}^{m}\prod_{j_1=1}^{m}\left(\mu_{i_2}^{(j_2)}-\mu_{i_1}^{(j_1)}\right)\nonumber.
	\end{eqnarray}
This proves Lemma \ref{Vanmu}.
\end{proof}

\begin{lemma}\label{exm}
	$\prod_{(i,j)\in \Omega}(e_{mn}^\T \eta_i^{(j)})= \left(\prod_{(i,j)\in \Omega}S(\mu_i^{(j)})\right)\left(\prod_{1\le i\le n}e_n^\T \xi_i\right)^m.$
\end{lemma}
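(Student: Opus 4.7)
The plan is to exploit the Kronecker-product form of the eigenvectors $\eta_i^{(j)}$ together with the factorization $e_{mn} = e_m \otimes e_n$. Writing the bracketed $m$-vector in \eqref{af} as $v_i^{(j)}$, so that $\eta_i^{(j)} = v_i^{(j)} \otimes \xi_i$, the mixed-product property of the Kronecker product immediately gives
$$e_{mn}^\T \eta_i^{(j)} = (e_m^\T v_i^{(j)})(e_n^\T \xi_i).$$
The problem therefore reduces to identifying the scalar $e_m^\T v_i^{(j)}$ with $S(\mu_i^{(j)})$.

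To do this, I will sum the entries of $v_i^{(j)}$ component by component. The first column in the brackets of \eqref{af} has $p$-th entry $S_{m-p}(\mu_i^{(j)})$ for $p=1,\ldots,m$, so its row-sum equals $\sum_{k=0}^{m-1} S_k(\mu_i^{(j)})$. The second column has $S_{\ell-p-1}(\mu_i^{(j)})$ in positions $p=1,\ldots,\ell-1$ and vanishes in positions $p=\ell,\ldots,m$, summing to $\sum_{k=0}^{\ell-2} S_k(\mu_i^{(j)})$. Combining the two and comparing with Definition \ref{dS} yields
$$e_m^\T v_i^{(j)} = S_{\ell-1}(\mu_i^{(j)}) \sum_{k=0}^{m-1} S_k(\mu_i^{(j)}) - S_m(\mu_i^{(j)}) \sum_{k=0}^{\ell-2} S_k(\mu_i^{(j)}) = S(\mu_i^{(j)}).$$

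The final step is to take the product over $(i,j) \in \Omega$. Since for each fixed $i$ the factor $e_n^\T \xi_i$ appears once for each $j \in \{1,\ldots,m\}$, the product splits neatly into
$$\prod_{(i,j)\in\Omega} e_{mn}^\T \eta_i^{(j)} = \left(\prod_{(i,j)\in\Omega} S(\mu_i^{(j)})\right) \left(\prod_{i=1}^n e_n^\T \xi_i\right)^m,$$
which is the desired identity. The computation is essentially mechanical; the only place requiring genuine care is matching the index ranges in the entries of the two column vectors in \eqref{af} so that their row-sums collapse to the two sums appearing in Definition \ref{dS}.
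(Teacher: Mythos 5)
Your proof is correct and follows essentially the same route as the paper's: both factor $e_{mn}^\T=e_m^\T\otimes e_n^\T$, apply the mixed-product property to get $e_{mn}^\T\eta_i^{(j)}=(e_m^\T v_i^{(j)})(e_n^\T\xi_i)$, identify $e_m^\T v_i^{(j)}$ with $S(\mu_i^{(j)})$ via Definition \ref{dS}, and then split the product over $\Omega$. No gaps.
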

\begin{proof}
	Noting that $e_{mn}^\T=(e_m)^\T \otimes (e_n)^\T$ together with the definitions of $\eta_i^{(j)}$ and $S(x)$, we have
	\begin{eqnarray}
	e_{mn}^\T\eta_i^{(j)}&=&(e_m^\T\otimes e_n^\T)\left(\left(S_{\ell-1}(\mu_i^{(j)})\begin{bmatrix}
		S_{m-1}(\mu_i^{(j)})\\
		\vdots\\
		S_{m-\ell+1}(\mu_i^{(j)})\\
		S_{m-\ell}(\mu_i^{(j)})\\
		\vdots\\
		S_0(\mu_i^{(j)})
	\end{bmatrix}-S_m(\mu_i^{(j)})\begin{bmatrix}
		S_{\ell-2}(\mu_i^{(j)})\\\vdots\\S_0(\mu_i^{(j)})\\0\\\vdots\\0
	\end{bmatrix}\right)\otimes \xi_i\right)\nonumber\\
	&=&\left(S_{\ell-1}(\mu_i^{(j)})\sum_{k=0}^{m-1}S_k(\mu_i^{(j)})-S_{m}(\mu_i^{(j)})\sum_{k=0}^{\ell-2}S_k(\mu_{i}^{(j)})\right)e_n^\T\xi_i\nonumber\\
	&=&S(\mu_i^{(j)})e_n^\T\xi_i.\nonumber
	\end{eqnarray}
	Noting that  $\prod_{(i,j)\in \Omega}(e_n^\T \xi_i)=\prod_{j=1}^{m}\prod_{i=1}^n e_n^\T\xi_i=\left(\prod_{i=1}^n e_n^\T\xi_i\right)^m$, Lemma \ref{exm} follows.
\end{proof}

\begin{lemma}\label{deteta1}
	\begin{equation*}
		\det[\eta_1^{(1)},\ldots,\eta_n^{(1)};\ldots;\eta_1^{(m)},\ldots,\eta_n^{(m)}]= \left(\det[\xi_1,\xi_2,\ldots,\xi_n]\right)^m \prod_{i=1}^{n}\det (f_{k}(\mu_i^{(j)}))_{m\times m},
	\end{equation*}
	where \begin{equation}\label{fk}
		f_{k}(x)=\begin{cases}
			S_{\ell-1}(x)S_{m-k}(x)-S_{m}(x)S_{\ell-k-1}(x),&1\le k\le \ell-1\\
			S_{\ell-1}(x)S_{m-k}(x),&\ell\le k\le m,
			\end{cases}
			\end{equation}
 and $(f_{k}(\mu_i^{(j)}))$ is an $m\times m$ matrix whose $(k,j)$-entry is $f_{k}(\mu_i^{(j)})$. 
\end{lemma}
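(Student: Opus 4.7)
The approach is to exploit the Kronecker product structure of the eigenvectors. Comparing Eq.~\eqref{af} with the definition of $f_k(x)$ in Eq.~\eqref{fk}, one sees immediately that
$$\eta_i^{(j)} = v_i^{(j)} \otimes \xi_i,$$
where $v_i^{(j)} = (f_1(\mu_i^{(j)}),f_2(\mu_i^{(j)}),\ldots,f_m(\mu_i^{(j)}))^{\T}$ is precisely the $j$-th column of the matrix $V_i := (f_k(\mu_i^{(j)}))_{m\times m}$ whose determinant appears in the lemma.

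Let $M$ denote the $mn\times mn$ matrix on the left-hand side of the identity. First I would reorder its columns so that all vectors sharing the same $i$-index are grouped together: pick a permutation matrix $P$ such that $MP = [B_1 \mid B_2 \mid \cdots \mid B_n]$, where $B_i = [\eta_i^{(1)},\ldots,\eta_i^{(m)}]$ is an $mn\times m$ block. By the mixed-product rule for Kronecker products, $(I_m\otimes\xi_i)\,v_i^{(j)} = v_i^{(j)}\otimes\xi_i = \eta_i^{(j)}$, so $B_i = (I_m\otimes\xi_i)\,V_i$. Assembling these identities yields the factorization
$$MP \;=\; \bigl[\,I_m\otimes\xi_1\;\big|\;\cdots\;\big|\;I_m\otimes\xi_n\,\bigr]\cdot \diag(V_1,\ldots,V_n).$$

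The second factor is block-diagonal with determinant $\prod_{i=1}^n \det V_i$. For the first factor, call it $L$, the key observation is that $L$ and $I_m\otimes X$ (where $X=[\xi_1,\ldots,\xi_n]$) have exactly the same set of columns but ordered differently. Indeed, the $((i-1)m+j)$-th column of $L$ is $e_j\otimes\xi_i$ (an $mn$-vector with $\xi_i$ in the $j$-th $n$-block), while the $((j-1)n+i)$-th column of $I_m\otimes X$ equals the same vector $e_j\otimes\xi_i$. Hence $L=(I_m\otimes X)Q$ for a permutation matrix $Q$, so $\det L = \sgn(Q)(\det X)^m$, and therefore
$$\det M \;=\; \sgn(P)\,\sgn(Q)\,(\det X)^m\prod_{i=1}^n\det V_i.$$

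The final (and main) step is to verify $\sgn(P)\sgn(Q)=1$, which follows from the observation that $P$ and $Q$ induce the same column reindexing: both send position $(i-1)m+j$ of the ``$i$ outer'' layout to position $(j-1)n+i$ of the ``$j$ outer'' layout (the perfect shuffle between $[n]\times[m]$ and $[m]\times[n]$). Hence the two sign factors are equal and their product is $+1$. The main obstacle of the argument is exactly this sign-bookkeeping for the column reorderings; once settled, the remaining manipulations are routine Kronecker-product algebra.
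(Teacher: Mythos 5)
Your proof is correct and is essentially the paper's argument in a slightly reorganized form: the paper factors the matrix directly as $\diag(Q,\ldots,Q)$ times the block matrix $(T_{k,j})$ of diagonal matrices and then invokes permutational \emph{similarity} (conjugation by the perfect shuffle) to reach $\diag(V_1,\ldots,V_n)$, which makes the sign cancellation automatic, whereas you permute columns first and verify explicitly that the two shuffle permutations coincide. Your sign check is valid (both $P$ and $Q$ are the same commutation matrix between the $(j\text{-outer})$ and $(i\text{-outer})$ orderings), so no gap remains.
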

\begin{proof}
	Let $E^{(j)}=[\eta_1^{(j)},\eta_2^{(j)},\ldots,\eta_n^{(j)}]$ for $j\in \{1,\ldots,m\}$. By Definition \ref{eigmu}, we have
		\begin{eqnarray}\label{ej}
		E^{(j)}&=&\left[\begin{bmatrix}
			f_{1}(\mu_1^{(j)})\cdot \xi_1\\
			f_{2}(\mu_1^{(j)}) \cdot\xi_1\\
			\vdots\\
			f_{m}(\mu_1^{(j)})\cdot \xi_1
		\end{bmatrix},\begin{bmatrix}
			f_{1}(\mu_2^{(j)}) \cdot\xi_2\\
			f_{2}(\mu_2^{(j)})\cdot\xi_2\\
			\vdots\\
			f_{m}(\mu_2^{(j)}) \cdot\xi_2
		\end{bmatrix},\cdots,\begin{bmatrix}
			f_{1}(\mu_n^{(j)}) \cdot\xi_n\\
			f_{2}(\mu_n^{(j)}) \cdot\xi_n\\
			\vdots\\
			f_{m}(\mu_n^{(j)})\cdot\xi_n
		\end{bmatrix}\right]\nonumber\\
		&=&\begin{bmatrix}
			[\xi_1,\xi_2\,\ldots,\xi_n]\cdot\diag[	f_{1}(\mu_1^{(j)}),		f_{1}(\mu_2^{(j)}),\ldots,		f_{1}(\mu_n^{(j)})]\\
			[\xi_1,\xi_2\,\ldots,\xi_n]\cdot\diag[	f_{2}(\mu_1^{(j)}),		f_{2}(\mu_2^{(j)}),\ldots,		f_{2}(\mu_n^{(j)}) ]\\
			\vdots\\
			[\xi_1,\xi_2\,\ldots,\xi_n]\cdot\diag[	f_{m}(\mu_1^{(j)}),	f_{m}(\mu_2^{(j)}),\ldots,		f_{m}(\mu_n^{(j)})]
		\end{bmatrix}.
	\end{eqnarray}
	
	Let $Q=[\xi_1,\xi_2,\ldots,\xi_n]$ and $T_{k,j}=\diag[f_{k}(\mu_1^{(j)}),	f_{k}(\mu_2^{(j)}),\ldots,	f_{k}(\mu_n^{(j)})]$ for $k,j\in\{1,2,\ldots,m\}$. Then we can rewrite Eq.~\eqref{ej} as 
	\begin{equation*}
	E^{(j)}=\begin{bmatrix} Q\cdot T_{1,j}\\
	 Q\cdot T_{2,j}\\
	 \vdots\\
	  Q\cdot T_{m,j}
	  \end{bmatrix}=\begin{bmatrix}Q&&&\\
	  &Q&&\\
	  &&\ddots&\\
	  &&&Q\end{bmatrix}\begin{bmatrix}  T_{1,j}\\
	   T_{2,j}\\
	  \vdots\\
	   T_{m,j}
	  \end{bmatrix}.
	\end{equation*}
which implies the following identity:
\begin{equation}\label{e1n}
[E^{(1)},E^{(2)},\ldots,E^{(m)}]=\begin{bmatrix}
Q&&&\\
&Q&&\\
&&\ddots&\\
&&&Q
\end{bmatrix} \begin{bmatrix}
T_{1,1}&T_{1,2}&\cdots&T_{1,m}\\
T_{2,1}&T_{2,2}&\cdots&T_{2,m}\\
\vdots&\vdots&&\vdots\\
T_{m,1}&T_{m,2}&\cdots&T_{m,m}
\end{bmatrix}.
\end{equation}
Since each $T_{k,j}$ is a diagonal matrix, one easily sees that the block matrix $(T_{k,j})$ is permutationally similar to the following block diagonal matrix
\begin{equation*}
\begin{bmatrix}
\begin{bmatrix}
	f_{1}(\mu_{1}^{(1)})&	f_{1}(\mu_{1}^{(2)})&\ldots& 	f_{1}(\mu_{1}^{(m)})\\
	f_{2}(\mu_{1}^{(1)})&	f_{2}(\mu_{1}^{(2)})&\ldots& 	f_{2}(\mu_{1}^{(m)})\\
	\vdots&\vdots&&\vdots\\
	f_{m}(\mu_{1}^{(1)})&	f_{m}(\mu_{1}^{(2)})&\ldots& 	f_{m}(\mu_{1}^{(m)})\\
\end{bmatrix}&&&\\
&\ddots&\\
&&&\begin{bmatrix}
	f_{1}(\mu_{n}^{(1)})&f_{1}(\mu_{n}^{(2)})&\ldots& 	f_{1}(\mu_{n}^{(m)})\\
		f_{2}(\mu_{n}^{(1)})&f_{2}(\mu_{n}^{(2)})&\ldots& 	f_{2}(\mu_{n}^{(m)})\\
	\vdots&\vdots&&\vdots\\
	f_{m}(\mu_{n}^{(1)})&f_{m}(\mu_{n}^{(2)})&\ldots& 	f_{m}(\mu_{n}^{(m)})\\
\end{bmatrix}
\end{bmatrix},
\end{equation*}
or written more compactly, the block matrix
\begin{equation*}
	\diag[(f_{k}(\mu_1^{(j)}))_{m\times m},(f_{k}(\mu_2^{(j)}))_{m\times m},\ldots,(f_{k}(\mu_n^{(j)}))_{m\times m}].
\end{equation*}
Thus, taking determinants on both sides of Eq.~\eqref{e1n} leads to
\begin{equation*}
\det [E^{(1)},E^{(2)},\ldots,E^{(m)}]=(\det Q)^m\prod_{1\le i\le n}\det(f_{k}(\mu_i^{(j)}))_{m\times m}.
\end{equation*}
This completes the proof.
\end{proof}
A $(0,1)$-matrix $B$ has the \emph{consecutive ones property} (for rows) if there exists a permutation of its columns that leaves the 1's consecutive in every row. It is well known that such matrices are totally unimodular \cite{fulkerson}, that is, every square submatrix has determinant $0, +1$ or $-1$. In particular, every nonsingular $(0,1)$-matrix with the consecutive ones property  has determinant $\pm 1$.

\begin{lemma}\label{cons}
	Let $f_k(x)$ with $1\le k\le m$ be defined as in Eq.~\eqref{fk}. Then there exists a $(0,1)$-matrix $B$ with consecutive ones property such that 
	\begin{equation*}
		\begin{bmatrix}
		f_1(x)\\f_2(x)\\\vdots\\f_m(x)
		\end{bmatrix}=B\begin{bmatrix}
		S_{0}(x)\\S_{1}(x)\\\vdots\\S_{m-1}(x)
		\end{bmatrix}.
	\end{equation*}
\end{lemma}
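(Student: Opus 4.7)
The plan is to explicitly compute the expansion of each $f_k(x)$ in the basis $S_0(x),\ldots,S_{m-1}(x)$ using the classical product formula for the renormalized Chebyshev polynomials,
\[
S_a(x)\,S_b(x)=\sum_{j=0}^{\min(a,b)}S_{a+b-2j}(x)=S_{|a-b|}(x)+S_{|a-b|+2}(x)+\cdots+S_{a+b}(x),
\]
and then to exhibit a column-permutation which turns the resulting coefficient matrix $B$ into one with the consecutive ones property.

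For $\ell\le k\le m$, the formula writes $f_k(x)=S_{\ell-1}(x)S_{m-k}(x)$ as a sum of distinct $S_j$'s whose indices form a step-$2$ arithmetic progression whose largest element is $(\ell-1)+(m-k)\le m-1$; so the $k$-th row of $B$ is a $\{0,1\}$-vector supported on such a progression inside $\{0,\ldots,m-1\}$. For $1\le k\le \ell-1$, I would expand both terms of $f_k=S_{\ell-1}S_{m-k}-S_m\,S_{\ell-k-1}$ using the product formula. The two expansions share the same top index $(\ell-1)+(m-k)=m+(\ell-k-1)$ and the same step-$2$ parity; the first has $\ell$ terms while the second has only $\ell-k$ terms, which makes the second expansion exactly the top part of the first. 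I expect verifying this matching of top indices and lengths to be the main step. After cancellation, $f_k$ is the sum of the remaining $k$ bottom terms of the first expansion, again a $\{0,1\}$-combination of $S_j$'s whose indices form a step-$2$ arithmetic progression; the hypotheses $k\le \ell-1$ and $\ell\le (m+1)/2$ are exactly what is needed to guarantee that this progression lies inside $\{0,1,\ldots,m-1\}$.

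At this point every row of $B$ is a $\{0,1\}$-vector whose support is a step-$2$ arithmetic progression in $\{0,1,\ldots,m-1\}$; in particular the $1$'s in a single row all have indices of a single parity. To finish I would reorder the columns by placing the even-indexed columns $S_0,S_2,S_4,\ldots$ first in natural order, followed by the odd-indexed columns $S_1,S_3,S_5,\ldots$ in natural order. Under this reordering, each step-$2$ progression inside $\{0,\ldots,m-1\}$ sits entirely inside one of the two parity blocks and its members occupy successive positions within that block; hence the $1$'s of every row form a single contiguous run in the permuted matrix, so $B$ has the consecutive ones property and Lemma~\ref{cons} follows.
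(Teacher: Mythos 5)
Your proposal is correct and follows essentially the same route as the paper: expand each $f_k$ via the Chebyshev product formula $S_aS_b=\sum_j S_{a+b-2j}$, observe that for $1\le k\le \ell-1$ the expansion of $S_mS_{\ell-k-1}$ is exactly the top segment of the expansion of $S_{\ell-1}S_{m-k}$ (both ending at index $m+\ell-k-1$ with the same parity), so each row of $B$ is supported on a step-$2$ progression in $\{0,\ldots,m-1\}$, and then permute columns by parity to make the ones consecutive. The only differences from the paper's proof are cosmetic (you merge its two subcases for $k\ge\ell$ via the absolute value, and order even-indexed columns before odd rather than the reverse).
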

\begin{proof}
	For two nonnegative integers $a$ and $b$ with $a\le b$ and $a\equiv b\pmod{2}$, we define $\mathcal{I}(a,b)=\{i\colon\, a\le i\le b \text{~and~} i\equiv a\pmod{2}\}$. 
	
	\noindent{\textbf{Claim 1 }: We have $S_p(x)S_q(x)=\sum_{i}S_i(x)$ for $p\ge q\ge 0$, where the summation is taken over $\mathcal{I}(p-q,p+q)$.}
	
	Let $x=\cos \theta$. Recalling that $S_i(\cos \theta) =\frac{\sin(i+1)\theta}{\sin \theta}$, we have
	\begin{eqnarray*}
		 \sum_{i\in \mathcal{I}(p-q,p+q)}S_i(\cos\theta)&=& \sum_{j=0}^{q} S_{p-q+2j}(\cos\theta)\nonumber\\
		 &=& \frac{1}{\sin\theta}\sum_{j=0}^{q}\sin(p-q+2j+1)\theta\nonumber\\
		  &=& \frac{1}{\sin^2\theta}\sum_{j=0}^{q}\sin(p-q+2j+1)\theta\cdot\sin\theta\nonumber\\
		  &=&-\frac{1}{2\sin^2\theta}\sum_{j=0}^{q}(\cos(p-q+2j+2)\theta-\cos(p-q+2j)\theta)\nonumber\\
		  &=&-\frac{1}{2\sin^2\theta}(\cos(p-q+2q+2)\theta-\cos(p-q)\theta)\nonumber\\
		  &=&\frac{1}{\sin^2\theta}(\sin(p+1)\theta\cdot\sin(q+1)\theta)\nonumber\\
		  &=&S_p(\cos\theta)S_q(\cos\theta).
	\end{eqnarray*}
	This proves Claim 1.
	
	\noindent{\textbf{Claim 2}: We have
		\begin{equation}\label{fk2}
			f_{k}(x)=\begin{cases}
				\sum\limits_{i\in \mathcal{I}_k}S_i(x),&1\le k\le \ell-1\\
				\sum\limits_{i\in \mathcal{I}_k}S_i(x),&\ell\le k\le m+1-\ell\\
			\sum\limits_{i\in \mathcal{I}_k}S_i(x),&m+2-\ell\le k\le m,
			\end{cases}
			\end{equation}
			where \begin{equation}\label{Ik}
			\mathcal{I}_k=\begin{cases}
				\mathcal{I}(m-\ell-k+1,m-\ell+k-1),&1\le k\le \ell-1\\
				\mathcal{I}(m-\ell-k+1,m+\ell-k-1),&\ell\le k\le m+1-\ell\\
				\mathcal{I}(\ell+k-m-1,\ell-k+m-1),&m+2-\ell\le k\le m.
			\end{cases}
			\end{equation}
			Moreover, $\mathcal{I}_k\subset\{0,1,\ldots,m-1\}$ for $k=1,\ldots,m$.}
			
Let $1\le k\le \ell-1$.   Note that $f_k(x)=S_{\ell-1}(x)S_{m-k}(x)-S_m(x)S_{\ell-k-1}$. As $2\le \ell\le (m+1)/2$ and $1\le k\le \ell-1$, we easily find that $m-k\ge \ell-1$ and $m\ge \ell-k-1$. It follows from Claim 1 that
\begin{equation*}
	f_k(x)=\sum_{i\in\mathcal{I}(m-\ell-k+1,m+\ell-k-1)}S_i(x)-\sum_{i\in\mathcal{I}(m-\ell+k+1,m+\ell-k-1)}S_i(x)=\sum_{i\in\mathcal{I}(m-\ell-k+1,m-\ell+k-1)}S_i(x).
\end{equation*}
This proves  Claim 2 for the case $1\le k\le \ell-1$. For the case $\ell\le k\le m+1-\ell$, we have $m-k\ge \ell-1$ and hence 
\begin{equation*}
	f_k(x)=\sum_{i\in\mathcal{I}(m-\ell-k+1,m+\ell-k-1)}S_i(x),
\end{equation*}
as desired.  Similarly, for the case $m+2-\ell\le k\le m$, we have $ \ell-1\ge m-k$ and hence 
\begin{equation*}
	f_k(x)=S_{m-k}(x)S_{\ell-1}(x)=\sum_{i\in\mathcal{I}(\ell+k-m-1,\ell-k+m-1)}S_i(x).
\end{equation*}
Finally, by comparing the size relationship between  $k$ and $\ell$ in each case, we find that the maximum value in each $\mathcal{I}_k$ is at most $m-1$. Thus, $\mathcal{I}_k\subset\{0,1,\ldots,m-1\}$ always holds. This proves Claim 2.

Let $B=(b_{kj})$ be the (0,1)-matrix corresponding to $\{\mathcal{I}_k\}$ in the sense that $b_{k,j+1}=1$ if and only if $j\in \mathcal{I}_k$. Then we can write Eq.~\eqref{fk2} in the matrix form: 
	\begin{equation*}
	\begin{bmatrix}
		f_1(x)\\f_2(x)\\\vdots\\f_m(x)
	\end{bmatrix}=B\begin{bmatrix}
		S_{0}(x)\\S_{1}(x)\\\vdots\\S_{m-1}(x)
	\end{bmatrix}.
\end{equation*}
We claim that $B$ has the consecutive ones property. Write the $j$-th column of $B$ as $\beta_j$ for $j=1,\ldots,m$. We construct a new matrix $B'$ as follows:
\begin{equation*}
	B'=[\beta_1,\beta_3,\ldots,\beta_{p};\beta_2,\beta_4,\ldots,\beta_{q}],
\end{equation*}
where $p$ (resp. $q$) is the largest odd (resp. even) integer not exceeding $m$. In other words,  $B'$ is obtained from $B$ by permuting columns such that columns with odd indices are placed first, followed by columns with even indices. Note that for any fixed $k$, the set $\mathcal{I}_k$  consists of either consecutive even numbers or  consecutive odd numbers. It follows that the 1s in each row of the resulting matrix $B'$ are consecutive. This means that $B$ has the consecutive ones property and hence completes the  proof of Lemma \ref{cons}.
\end{proof}
\begin{remark}\label{rmS}\normalfont{
	It is easy to see from Eq.~\eqref{Ik} that $m-1\in \mathcal{I}_k$ if and only if $k=\ell$. Noting that $S_k(x)$ is a monic polynomial of degree $k$, we find from Eq.~\eqref{fk2} that $\sum_{k=1}^mf_k(x)$ (i.e., $S(x)$) is a monic polynomial of degree $m-1$.}
\end{remark}
An important application of Lemma \ref{cons} is to determine (up to a sign) the number $\det(f_k(\mu_i^{(j)}))$ appearing in Lemma \ref{deteta1}. 
\begin{corollary}\label{van}
	$\det (f_k(\mu_i^{(j)}))_{m\times m}=\pm \prod_{1\le j_1< j_2\le m}\left(\mu_i^{(j_2)}-\mu_i^{(j_1)}\right)$ for $i=1,\ldots,n$. 
\end{corollary}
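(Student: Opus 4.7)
The plan is to factor the matrix $(f_k(\mu_i^{(j)}))_{m\times m}$ using Lemma \ref{cons} and compute each factor separately. By Lemma \ref{cons}, for any $x$ one has $(f_1(x),\ldots,f_m(x))^{\T}=B\,(S_0(x),\ldots,S_{m-1}(x))^{\T}$, so substituting $x=\mu_i^{(j)}$ for $j=1,\ldots,m$ produces the matrix identity $(f_k(\mu_i^{(j)}))_{k,j}=BN$, where $N$ is the $m\times m$ matrix with $(k,j)$-entry $S_{k-1}(\mu_i^{(j)})$. Taking determinants then gives $\det(f_k(\mu_i^{(j)}))=\det B\cdot\det N$.

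For $\det N$, I would use that $S_{k-1}(x)$ is a monic polynomial of degree $k-1$. Hence subtracting suitable linear combinations of earlier rows from later rows (an operation that preserves the determinant) reduces $N$ to the classical Vandermonde matrix with $(k,j)$-entry $(\mu_i^{(j)})^{k-1}$. Consequently $\det N=\prod_{1\le j_1<j_2\le m}\bigl(\mu_i^{(j_2)}-\mu_i^{(j_1)}\bigr)$, which is nonzero because the $\mu_i^{(j)}$ are pairwise distinct by Proposition \ref{nsc}.

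It remains to prove $\det B=\pm 1$. Since $B$ is a square $(0,1)$-matrix with the consecutive ones property, total unimodularity gives $\det B\in\{0,\pm 1\}$, so the main obstacle is to rule out $\det B=0$. For this I would argue indirectly through the eigenvector structure. Each $\mu_i^{(j)}$ is a root of $S_m(x)-\lambda_i S_{\ell-1}(x)S_{m-\ell}(x)$, and it cannot be a zero of either $S_{\ell-1}$ or $S_{m-\ell}$, for otherwise $S_m(\mu_i^{(j)})=0$ would follow, contradicting $A\cap B=A\cap C=\emptyset$ (Claim~1 in the proof of Proposition~\ref{nsc}). Hence $S_{\ell-1}(\mu_i^{(j)})$ and $S_{m-\ell}(\mu_i^{(j)})$ are both nonzero, so by Eq.~\eqref{af} each $\eta_i^{(j)}$ is a nonzero Kronecker product. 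Lemma~\ref{eigA} then says that these are nonzero eigenvectors of the symmetric matrix $\tilde{A}$ corresponding to the pairwise distinct eigenvalues $\mu_i^{(j)}$, so they are linearly independent and $\det[\eta_1^{(1)},\ldots,\eta_n^{(m)}]\neq 0$. Lemma~\ref{deteta1}, combined with the nonsingularity of $Q=[\xi_1,\ldots,\xi_n]$, then forces $\det(f_k(\mu_i^{(j)}))\neq 0$ for every $i$; together with $\det N\neq 0$ this gives $\det B\neq 0$, whence $\det B=\pm 1$. Substituting this value and the Vandermonde expression for $\det N$ into $\det(f_k(\mu_i^{(j)}))=\det B\cdot\det N$ yields the claimed identity.
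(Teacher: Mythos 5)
Your proposal is correct and follows essentially the same route as the paper: factor the matrix as $B$ times the matrix $(S_{k-1}(\mu_i^{(j)}))$, reduce the latter to a Vandermonde determinant by monicity, and deduce $\det B=\pm1$ from the consecutive ones property together with the nonvanishing of $\det[\eta_1^{(1)},\ldots,\eta_n^{(m)}]$ via Lemma \ref{deteta1}. Your extra step verifying that each $\eta_i^{(j)}$ is nonzero (via $S_{\ell-1}(\mu_i^{(j)})\neq 0$) is a welcome explicit justification of a point the paper leaves implicit, but it does not change the argument.
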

\begin{proof}
	Let $B$ be the (0,1)-matrix stated in Lemma \ref{cons}. Then, substituting $x$ with $\mu_{i}^{(j)}$, we have 
	\begin{equation*}
	\begin{bmatrix}
		f_1(\mu_i^{(j)})\\f_2(\mu_i^{(j)})\\\vdots\\f_m(\mu_i^{(j)})
	\end{bmatrix}=B\begin{bmatrix}
		S_{0}(\mu_i^{(j)})\\S_{1}(\mu_i^{(j)})\\\vdots\\S_{m-1}(\mu_i^{(j)})
	\end{bmatrix} \text{~for~} j=1,\ldots,m.
	\end{equation*}	
	Written in matrix form, we obtain
		\begin{equation*}
		\begin{bmatrix}
			f_1(\mu_i^{(1)})&f_1(\mu_i^{(2)})&\cdots&f_1(\mu_i^{(m)})\\
			f_2(\mu_i^{(1)})&f_2(\mu_i^{(2)})&\cdots&f_2(\mu_i^{(m)})\\
			\vdots&\vdots&&\vdots\\			
			f_m(\mu_i^{(1)})&f_m(\mu_i^{(2)})&\cdots&f_m(\mu_i^{(m)})
		\end{bmatrix}=B\begin{bmatrix}
			S_{0}(\mu_i^{(1)})&S_{0}(\mu_i^{(2)})&\cdots&S_{0}(\mu_i^{(m)})\\
			S_{1}(\mu_i^{(1)})&S_{1}(\mu_i^{(1)})&\cdots&S_{1}(\mu_i^{(m)})\\
				\vdots&\vdots&&\vdots\\	
				S_{m-1}(\mu_i^{(1)})&S_{m-1}(\mu_i^{(1)})&\cdots&S_{m-1}(\mu_i^{(m)})
		\end{bmatrix}.
	\end{equation*}	
	Taking determinants on both sides, we obtain 	$\det (f_k(\mu_i^{(j)}))=\det B\det (S_{k-1}(\mu_i^{(j)}))$. Recalling  that 	$\det[\eta_1^{(1)},\ldots,\eta_n^{(1)};\ldots;\eta_1^{(m)},\ldots,\eta_n^{(m)}] \neq 0$ as $G\circ P_m^{(\ell)}$ has only simple roots, we see from Lemma \ref{deteta1} that 	$\det (f_k(\mu_i^{(j)}))\neq 0$ and hence $\det B\neq 0$. Consequently, $\det B=\pm 1$ as $B$ has the consecutive ones property. This means that
		$\det (f_k(\mu_i^{(j)}))=\pm \det (S_{k-1}(\mu_i^{(j)}))$.  Since $S_{k-1}(x)$ is a monic polynomial with degree $k-1$ for $k=1,2,\ldots,m$, there exists a unit lower triangular matrix $L$ such that 
			\begin{equation*}
			\begin{bmatrix}
				S_0(x)\\S_1(x)\\\vdots\\S_{m-1}(x)
			\end{bmatrix}=L\begin{bmatrix}
				1\\x\\\vdots\\x^{m-1}
			\end{bmatrix}.
		\end{equation*}
	It follows that $\det (S_{k-1}(\mu_i^{(j)}))$ equals the Vandermonde determinant 
	\begin{equation*}
		\det\begin{bmatrix}
			1&1&\cdots&1\\
			\mu_i^{(1)}&\mu_i^{(2)}&\cdots&\mu_i^{(1)}\\
			\vdots&\vdots&&\vdots\\
			\mu_i^{(m-1)}&\mu_i^{(m-1)}&\cdots&\mu_i^{(m-1)}
		\end{bmatrix},
	\end{equation*}
	which equals $\prod_{1\le j_1< j_2\le m}\left(\mu_i^{(j_2)}-\mu_i^{(j_1)}\right)$. This completes the proof of Corollary \ref{van}.	
\end{proof}
Now we are in a position to provide a proof of  Proposition \ref{dwt}.

\noindent\textbf{Proof of Proposition \ref{dwt}} For convenience, let us denote
\begin{equation}\label{d1}
	\Delta_1=\prod_{1\le i_1< i_2\le n}\prod_{j_2=1}^{m}S_{\ell-1}(\mu_{i_2}^{(j_2)})S_{m-\ell}(\mu_{i_2}^{(j_2)})
\end{equation}
\begin{equation}\label{d2}
\Delta_2=\prod_{(i,j)\in\Omega} S(\mu_i^{(j)})=\prod_{i=1}^n\prod_{j=1}^m S(\mu_i^{(j)})
\end{equation} 
and 
\begin{equation*}
\Delta_3=\prod_{i=1}^{n}\prod_{1\le j_1<j_2\le m}\left(\mu_i^{(j_2)}-\mu_i^{(j_1)}\right).
\end{equation*}
 By  Lemmas \ref{ppmu} and \ref{Vanmu}, we have
 
 	\begin{equation*}
 	\prod_{(i_1,j_1)<(i_2,j_2)}(\mu_{i_2}^{(j_2)}-\mu_{i_1}^{(j_1)})=\pm	\Delta_3\cdot\left(\prod_{1\le i_1< i_2\le n}(\lambda_{i_2}-\lambda_{i_1})^m\right)\cdot\Delta_1.
 \end{equation*}
 Combining Lemma \ref{deteta1} and Corollary \ref{van} leads to 
  \begin{equation*}
 	\det[\eta_1^{(1)},\ldots,\eta_n^{(1)};\ldots;\eta_1^{(m)},\ldots,\eta_n^{(m)}]=\pm  \left(\det[\xi_1,\xi_2,\ldots,\xi_n]\right)^m\cdot\Delta_3.
 \end{equation*}
 It follows from Corollary \ref{dwt3} and Lemma \ref{exm} that
 \begin{eqnarray*}
&&\det W(G\circ P_m^{(\ell)})\\
&=& \frac{\prod_{(i_1,j_1)<(i_2,j_2)}(\mu_{i_2}^{(j_2)}-\mu_{i_1}^{(j_1)})\prod_{(i,j)\in \Omega}(e_{mn}^\T \eta_i^{(j)})}{\det[\eta_1^{(1)},\ldots,\eta_n^{(1)};\ldots;\eta_1^{(m)},\ldots,\eta_n^{(m)}]}\\
&=&\pm \frac{\left(\Delta_3\left(\prod_{1\le i_1< i_2\le n}(\lambda_{i_2}-\lambda_{i_1})^m\right)\Delta_1\right)\cdot \left(\Delta_2\left(\prod_{1\le i\le n}e_n^\T \xi_i\right)^m\right)}{ \left(\prod_{1\le i\le n}e_n^\T \xi_i\right)^m\left(\det[\xi_1,\xi_2,\ldots,\xi_n]\right)^m\cdot\Delta_3}\\
&=&\pm \Delta_1\Delta_2 \left(\frac{\prod_{1\le i_1< i_2\le n}(\lambda_{i_2}-\lambda_{i_1})\prod_{1\le i\le n}(e_n^\T \xi_i)}{\det[\xi_1,\xi_2,\ldots,\xi_n]}\right)^m\\
&=&\pm \Delta_1\Delta_2(\det W(G))^m.
\end{eqnarray*}
This completes the proof of Proposition \ref{dwt}.

\section{Proof of Theorem \ref{main}}\label{pft}
According to Proposition \ref{dwt}, to show Theorem \ref{main}, we need to determine the exact values of $\Delta_1$ and $\Delta_2$  defined in Eqs.~\eqref{d1} and \eqref{d2}. 
\begin{definition}\normalfont{
 		Let $f(x)=a_nx^n+a_{n-1}x^{n-1}+\cdots+a_1x+a_0$ and $g(x)=b_mx^m+b_{m-1}x^{m-1}+\cdots+b_1x+b_0$. The resultant of $f(x)$ and $g(x)$, denoted by  $\Res(f(x),g(x))$, is defined to be
 		$$a_n^mb_m^n\prod_{1\le i\le n,1\le j\le m}(\alpha_i-\beta_j),$$
 		where $\alpha_i$'s and $\beta_j$'s are the roots (in complex field $\mathbb{C}$) of $f(x)$ and $g(x)$, respectively.
 	}
 \end{definition}
 We list some basic properties of the resultants for convenience.
 \begin{lemma}\label{basicres}
 	Let $f(x)=a_nx^n+\cdots+a_0=a_n\prod_{i=1}^n(x-\alpha_i)$ and $g(x)=b_mx^m+\cdots+b_0=b_m\prod_{j=1}^m(x-\beta_j)$. Then the following statements hold:\\
 	\textup{(\rmnum{1})} $\Res(f(x),g(x))=a_n^m\prod_{i=1}^{n}g(\alpha_i) =(-1)^{mn}b_m^{n}\prod_{j=1}^m f(\beta_j);$\\
 	\textup{(\rmnum{2})} If $m<n$ then $\Res(f(x)+tg(x),g(x))=\Res(f(x),g(x))$ for any $t\in \mathbb{C}$;\\
 	\textup{(\rmnum{3})} $\Res(f(tx+s),g(tx+s))=\Res(f(tx),g(tx))=t^{mn}\Res(f(x),g(x))$ for any $t\in \mathbb{C}\setminus\{0\}$ and $s\in \mathbb{C}$.
 \end{lemma}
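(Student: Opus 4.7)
The plan is to prove all three statements directly from the definition
\[
\Res(f(x),g(x))=a_n^m b_m^n\prod_{i,j}(\alpha_i-\beta_j),
\]
treating each part as a short algebraic manipulation; none of these should present any real difficulty, so the main task is simply to record the identities cleanly.

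First I would prove (i). Since $g(x)=b_m\prod_{j=1}^{m}(x-\beta_j)$, evaluating at $\alpha_i$ gives $g(\alpha_i)=b_m\prod_{j=1}^m(\alpha_i-\beta_j)$, so $\prod_{j=1}^{m}(\alpha_i-\beta_j)=b_m^{-1}g(\alpha_i)$. Taking the product over $i=1,\ldots,n$ and multiplying by $a_n^m b_m^n$ yields the first equality. For the second equality I would instead write $f(\beta_j)=a_n\prod_{i=1}^n(\beta_j-\alpha_i)=(-1)^n a_n\prod_{i=1}^n(\alpha_i-\beta_j)$; the extra sign $(-1)^{mn}$ appears when the product is taken over all $j$.

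Next, for (ii), I would use the second identity from (i). Since $\deg(f+tg)=n$ with leading coefficient $a_n$ (because $m<n$), the formula gives
\[
\Res(f(x)+tg(x),g(x))=(-1)^{mn}b_m^n\prod_{j=1}^{m}\bigl(f(\beta_j)+tg(\beta_j)\bigr).
\]
But $g(\beta_j)=0$ for every $j$, so the $t$-term vanishes and the right-hand side equals $(-1)^{mn}b_m^n\prod_{j=1}^m f(\beta_j)=\Res(f(x),g(x))$.

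Finally, for (iii), I would perform the substitution $x\mapsto tx+s$ explicitly. Writing $f(tx+s)=a_n\prod_{i=1}^{n}(tx+s-\alpha_i)=a_nt^n\prod_{i=1}^{n}\bigl(x-(\alpha_i-s)/t\bigr)$ shows that $f(tx+s)$ has leading coefficient $a_nt^n$ and roots $(\alpha_i-s)/t$, and analogously $g(tx+s)$ has leading coefficient $b_mt^m$ and roots $(\beta_j-s)/t$. Plugging into the definition,
\[
\Res\bigl(f(tx+s),g(tx+s)\bigr)=(a_nt^n)^m(b_mt^m)^n\prod_{i,j}\frac{(\alpha_i-s)-(\beta_j-s)}{t}=t^{nm+mn-nm}a_n^m b_m^n\prod_{i,j}(\alpha_i-\beta_j)=t^{mn}\Res(f(x),g(x)).
\]
Since the computation is independent of $s$, setting $s=0$ gives the middle equality as well. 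This handles all three parts; as noted, there is no serious obstacle, just bookkeeping of leading coefficients and signs.
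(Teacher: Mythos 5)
Your proof is correct: all three parts follow by exactly the bookkeeping you describe, and the one point that needs care in (ii) --- that $m<n$ guarantees $f+tg$ still has degree $n$ with leading coefficient $a_n$, so the second formula of (i) applies and the $t g(\beta_j)$ terms vanish --- is handled properly. The paper itself states this lemma without proof, as a list of standard resultant properties, and your direct verification from the paper's product-over-roots definition is the standard argument one would supply.
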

 
 \begin{lemma} \label{res1}
 For any integers $\ell, m$ with $2\le \ell\le (m+1)/2$ and $\gcd(\ell,m+1)=1$,
 $$	\Res\left(U_m(x),U_{\ell-1}(x)U_{m-\ell}(x)\right)=\pm 2^{m(m-1)}.$$
 \end{lemma}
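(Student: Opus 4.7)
The plan is to reduce the resultant of a product to a product of resultants, then evaluate $\Res(U_m, U_n)$ directly for coprime parameters using the sine formula for $U_n$.

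\textbf{Step 1: Multiplicativity.} The resultant is multiplicative in its second argument, so
\begin{equation*}
\Res(U_m(x), U_{\ell-1}(x) U_{m-\ell}(x)) = \Res(U_m(x), U_{\ell-1}(x)) \cdot \Res(U_m(x), U_{m-\ell}(x)).
\end{equation*}
The coprimality hypothesis $\gcd(\ell, m+1) = 1$ immediately gives $\gcd(\ell, m+1) = 1$ and $\gcd(m-\ell+1, m+1) = \gcd(\ell, m+1) = 1$, so both factors reduce to a general computation of $\Res(U_m, U_n)$ under the assumption $\gcd(n+1, m+1) = 1$.

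\textbf{Step 2: Evaluating $\Res(U_m, U_n)$ when $\gcd(m+1,n+1)=1$.} Since $U_m$ has leading coefficient $2^m$ and roots $\cos(k\pi/(m+1))$ for $k = 1, \ldots, m$, part (\rmnum{1}) of Lemma \ref{basicres} gives
\begin{equation*}
\Res(U_m, U_n) = 2^{mn} \prod_{k=1}^{m} U_n\!\left(\cos\tfrac{k\pi}{m+1}\right) = 2^{mn} \prod_{k=1}^{m} \frac{\sin\!\bigl((n+1)k\pi/(m+1)\bigr)}{\sin\!\bigl(k\pi/(m+1)\bigr)}.
\end{equation*}
For each $k$, write $(n+1)k = q_k(m+1) + r_k$ with $0 \le r_k \le m$; because $\gcd(n+1,m+1) = 1$, the map $k \mapsto r_k$ is a bijection on $\{1, \ldots, m\}$ (and $r_k \ne 0$ since $m+1 \nmid (n+1)k$). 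Then $\sin\bigl((n+1)k\pi/(m+1)\bigr) = (-1)^{q_k}\sin(r_k\pi/(m+1))$, which implies
\begin{equation*}
\prod_{k=1}^{m} \sin\!\bigl((n+1)k\pi/(m+1)\bigr) = (-1)^{\sum_k q_k} \prod_{r=1}^{m} \sin(r\pi/(m+1)).
\end{equation*}
The numerator and denominator products of sines therefore cancel (up to sign), leaving $\Res(U_m, U_n) = \pm 2^{mn}$.

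\textbf{Step 3: Combining.} Applying Step 2 with $n = \ell - 1$ and $n = m - \ell$ yields
\begin{equation*}
\Res(U_m, U_{\ell-1}) = \pm 2^{m(\ell-1)}, \qquad \Res(U_m, U_{m-\ell}) = \pm 2^{m(m-\ell)},
\end{equation*}
and their product is $\pm 2^{m(m-1)}$, which is the claimed identity.

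\textbf{Main obstacle.} The only delicate point is verifying in Step 2 that the two sine products genuinely cancel, which requires the bijectivity of $k \mapsto (n+1)k \bmod (m+1)$ on $\{1,\ldots,m\}$ together with the identity $\prod_{r=1}^m \sin(r\pi/(m+1)) = (m+1)/2^m$. Since the final claim only asks for the absolute value, we do not need to track the sign $(-1)^{\sum q_k}$, which is fortunate and simplifies the bookkeeping considerably.
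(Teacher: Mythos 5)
Your proof is correct and rests on the same two ingredients as the paper's: the product formula for the resultant in terms of Chebyshev values at cosines of rational multiples of $\pi$ (Lemma \ref{basicres}(\rmnum{1}) plus the sine formula), and the observation that multiplication by a unit permutes the nonzero residues, forcing the sine products to cancel up to sign. The only cosmetic difference is that you split the resultant multiplicatively and evaluate $U_{\ell-1}$ and $U_{m-\ell}$ at the roots of $U_m$ (working modulo $m+1$ twice), whereas the paper evaluates $U_m$ at the roots of $U_{\ell-1}U_{m-\ell}$ (working modulo $\ell$ and modulo $m+1-\ell$); also note that your closing remark is slightly off, since the cancellation needs only the bijectivity of $k\mapsto r_k$, not the closed form of $\prod_{r=1}^m\sin(r\pi/(m+1))$.
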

\begin{proof}
Note that the zeroes of $U_{\ell-1} (x)U_{m-\ell}(x)$ are $\cos\frac{\pi j}{\ell}$ for $1\le j\le \ell-1$ and $\cos\frac{\pi j}{m+1-\ell}$ for $1\le j\le m-\ell$. As $U_m(x)$ is of degree $m$ and $U_{\ell-1}(x)U_{m-\ell}(x)$ is of degree $m-1$ with leading coefficient $2^{m-1}$, the resultant factors as 
\begin{equation}
	(-1)^{m(m-1)}2^{m(m-1)}\prod_{j=1}^{\ell-1}U_m\left(\cos\frac{\pi j}{\ell}\right)\prod_{j=1}^{m-\ell}U_m\left(\cos\frac{\pi j}{m+1-\ell}\right)
\end{equation}
We claim that $\prod_{j=1}^{\ell-1}U_m\left(\cos\frac{\pi j}{\ell}\right)=\pm 1$. Indeed, we have 
\begin{equation}
\prod_{j=1}^{\ell-1}	U_m\left(\cos\frac{\pi j}{\ell}\right)=\prod_{j=1}^{\ell-1}\frac{\sin\frac{\pi (m+1)j} \ell}{\sin \frac{\pi j}{\ell}}
\end{equation}
For $j\in\{1,2,\ldots,\ell-1\}$, let $r_j$ be the least 
nonnegative residue of $(m+1)j$ modulo $\ell$. As $\gcd(\ell,m+1)=1$, we easily see that $r_j\neq 0$ and all these $r_j$'s are pairwise different. This means that $(r_1,r_2,\ldots,r_{\ell-1})$ is a permutation of $\{1,2,\ldots,\ell-1\}$. Noting that $\sin\frac{\pi (m+1)j} {\ell}=\pm \sin\frac{\pi r_i} {\ell}$ for every $j$, the claim follows.  As $\gcd(m+1-\ell,m+1)=\gcd(\ell,m+1)=1$, a similar argument indicates that $\prod_{j=1}^{m-\ell}U_m\left(\cos\frac{\pi j}{m+1-\ell}\right)$ also equals $\pm 1$. This completes the proof of Lemma \ref{res1}.
\end{proof}
\begin{corollary}\label{fd1}
	Let $\ell, m,n$ be integers with $2\le \ell \le(m+1)/2$ and $\gcd(\ell,m+1)=1$. Then for any $i\in\{1,2,\ldots,n\}$,
	\begin{equation*}
		\prod_{j=1}^m S_{\ell-1}(\mu_i^{(j)})S_{m-\ell}(\mu_{i}^{(j)})=\pm 1.
	\end{equation*}
\end{corollary}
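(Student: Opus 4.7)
My plan is to recognize the product $\prod_{j=1}^m S_{\ell-1}(\mu_i^{(j)})S_{m-\ell}(\mu_i^{(j)})$ as a resultant, reduce it to the resultant appearing in Lemma~\ref{res1}, and then apply that lemma. This is a natural route because the $\mu_i^{(j)}$'s are precisely the roots of $S_m(x) - \lambda_i S_{\ell-1}(x)S_{m-\ell}(x)$, so a symmetric function evaluation of this kind is essentially a resultant in disguise.

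First, set $f(x) = S_m(x) - \lambda_i S_{\ell-1}(x)S_{m-\ell}(x)$ and $g(x) = S_{\ell-1}(x)S_{m-\ell}(x)$. By definition $f$ is monic of degree $m$ with roots $\mu_i^{(1)}, \ldots, \mu_i^{(m)}$, while $g$ is monic of degree $m-1$. Lemma~\ref{basicres}(\rmnum{1}) therefore gives
\begin{equation*}
\prod_{j=1}^m S_{\ell-1}(\mu_i^{(j)})S_{m-\ell}(\mu_i^{(j)}) = \prod_{j=1}^m g(\mu_i^{(j)}) = \Res(f, g).
\end{equation*}
Since $\deg g = m-1 < m = \deg f$, Lemma~\ref{basicres}(\rmnum{2}) (applied with $t = -\lambda_i$) eliminates the $\lambda_i$-dependence:
\begin{equation*}
\Res(f, g) = \Res\bigl(S_m(x),\, S_{\ell-1}(x)S_{m-\ell}(x)\bigr).
\end{equation*}

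Next, I pass from the renormalized Chebyshev polynomials $S_k$ to the standard ones $U_k$ via the identity $S_k(x) = U_k(x/2)$. Applying Lemma~\ref{basicres}(\rmnum{3}) with $t = 1/2$ and $s = 0$, and using that the product of the degrees is $m(m-1)$,
\begin{equation*}
\Res\bigl(S_m(x),\, S_{\ell-1}(x)S_{m-\ell}(x)\bigr) = \frac{1}{2^{m(m-1)}}\,\Res\bigl(U_m(x),\, U_{\ell-1}(x)U_{m-\ell}(x)\bigr).
\end{equation*}
Lemma~\ref{res1} then gives $\Res(U_m(x), U_{\ell-1}(x)U_{m-\ell}(x)) = \pm 2^{m(m-1)}$, so the two factors of $2^{m(m-1)}$ cancel and we obtain $\prod_{j=1}^m S_{\ell-1}(\mu_i^{(j)})S_{m-\ell}(\mu_i^{(j)}) = \pm 1$, as claimed.

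The plan is routine once Lemma~\ref{res1} is available. The only delicate point is correctly tracking the scaling factor $t^{mn} = (1/2)^{m(m-1)}$ in the substitution $x \mapsto x/2$; no serious obstacle is expected at this stage, as all of the genuine number-theoretic content (in particular the coprimality hypothesis $\gcd(\ell, m+1) = 1$, needed to guarantee that the $\pm 1$ identities for $\prod_j U_m(\cos(\pi j/\ell))$ and $\prod_j U_m(\cos(\pi j/(m+1-\ell)))$ hold) has already been absorbed into Lemma~\ref{res1}.
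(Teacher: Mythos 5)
Your proposal is correct and follows essentially the same route as the paper: identify the product as $\Res(S_m(x)-\lambda_i S_{\ell-1}(x)S_{m-\ell}(x),\,S_{\ell-1}(x)S_{m-\ell}(x))$ via Lemma~\ref{basicres}(\rmnum{1}), strip the $\lambda_i$-term by (\rmnum{2}), rescale to $U_k$ by (\rmnum{3}), and invoke Lemma~\ref{res1}. The bookkeeping of the factor $(1/2)^{m(m-1)}$ is handled exactly as in the paper's proof.
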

\begin{proof}
	Note that $S_m(x)-\lambda_i S_{\ell-1}(x)S_{m-\ell}(x)$ is a monic polynomial whose zeroes are $\mu_i^{(1)}$, $\mu_i^{(2)}$, $\ldots$, $\mu_i^{(m)}$. It follows from Lemmas \ref{basicres} and \ref{res1} that
	\begin{eqnarray*}
		\prod_{j=1}^m S_{\ell-1}(\mu_i^{(j)})S_{m-\ell}(\mu_{i}^{(j)})&=&\Res(S_m(x)-\lambda_i S_{\ell-1}(x)S_{m-\ell}(x),S_{\ell-1}(x)S_{m-\ell}(x))\\
		&=&\Res(S_m(x),S_{\ell-1}(x)S_{m-\ell}(x))\\
		&=&\left(\frac{1}{2}\right)^{m(m-1)}\Res(U_m(x),U_{\ell-1}(x)U_{m-\ell}(x))\\
		&=&\pm 1.
	\end{eqnarray*}
	This completes the proof of Corollary \ref{fd1}.
\end{proof}
\begin{lemma}\label{res2}
	 For any integers $\ell, m$ with $2\le \ell \le(m+1)/2$ and $\gcd(\ell,m+1)=1$,
	$$	\Res\left(U_m(x)+tU_{\ell-1}(x)U_{m-\ell}(x),U_{\ell-1}(x)\sum_{k=0}^{m-1}U_k(x)-U_m(x)\sum_{k=0}^{\ell-2}U_k(x)\right)=\pm t^{\lfloor\frac{m}{2}\rfloor}2^{m(m-1)},$$
	where $t$ is any complex number.
\end{lemma}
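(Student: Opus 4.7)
My plan is to locate the roots of
\[
g(x):=U_{\ell-1}(x)\sum_{k=0}^{m-1}U_k(x)-U_m(x)\sum_{k=0}^{\ell-2}U_k(x)
\]
explicitly and then invoke Lemma \ref{basicres}(i). Since $g(y)=S(2y)$ with $S$ as in Definition \ref{dS}, Remark \ref{rmS} yields $\deg g=m-1$ with leading coefficient $2^{m-1}$, so that
\[
\Res\!\bigl(U_m+t\,U_{\ell-1}U_{m-\ell},\,g\bigr)=2^{m(m-1)}\prod_{\beta}\bigl(U_m(\beta)+t\,U_{\ell-1}(\beta)U_{m-\ell}(\beta)\bigr),
\]
where $\beta$ runs over the $m-1$ roots of $g$. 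The task is therefore to show that the last product equals $\pm t^{\lfloor m/2\rfloor}$.

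The central step is to establish the closed-form factorization
\[
g(\cos\theta)=\frac{\sin(\tfrac{\ell}{2}\theta)\,\sin(\tfrac{m+1}{2}\theta)\,\sin(\tfrac{m+1-\ell}{2}\theta)}{\sin^{2}(\theta/2)\,\sin\theta}.
\]
I plan to derive this by writing $U_k(\cos\theta)=\sin((k+1)\theta)/\sin\theta$, evaluating the two finite sine sums using $2\sin(\theta/2)\sum_{k=1}^{n}\sin(k\theta)=\cos(\theta/2)-\cos((n+\tfrac12)\theta)$, and collapsing the resulting expression via product-to-sum identities. Reading off zeros in $\theta\in(0,\pi)$, the $m-1$ roots of $g$ partition into three pairwise disjoint sets
\[
A=\Bigl\{\cos\tfrac{2k\pi}{m+1}\Bigr\}_{k=1}^{\lfloor m/2\rfloor},\quad
B=\Bigl\{\cos\tfrac{2k\pi}{\ell}\Bigr\}_{k=1}^{\lfloor(\ell-1)/2\rfloor},\quad
C=\Bigl\{\cos\tfrac{2k\pi}{m+1-\ell}\Bigr\}_{k=1}^{\lfloor(m-\ell)/2\rfloor};
\]
disjointness and the count $|A|+|B|+|C|=m-1$ follow from the hypothesis $\gcd(\ell,m+1)=1$ (which also gives $\gcd(m+1-\ell,m+1)=\gcd(\ell,m+1-\ell)=1$) via the same divisibility argument as in Claim~1 of Proposition \ref{nsc}.

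For $\beta\in A$ one has $U_m(\beta)=0$ but $U_{\ell-1}(\beta)U_{m-\ell}(\beta)\ne 0$, while for $\beta\in B\cup C$ exactly one of $U_{\ell-1}(\beta),U_{m-\ell}(\beta)$ vanishes and $U_m(\beta)\ne 0$; hence every factor $U_m(\beta)+t\,U_{\ell-1}(\beta)U_{m-\ell}(\beta)$ is a monomial in $t$, and the product collapses to
\[
t^{\lfloor m/2\rfloor}\prod_{\beta\in A}U_{\ell-1}(\beta)U_{m-\ell}(\beta)\cdot\prod_{\beta\in B}U_m(\beta)\cdot\prod_{\beta\in C}U_m(\beta).
\]
Each of the three sub-products is a ratio of the form $\prod_{k}\sin(2kM\pi/N)/\sin(2k\pi/N)$ with $\gcd(M,N)=1$, and equals $\pm 1$ by the permutation argument of Lemma \ref{res1}: the map $k\mapsto\min(kM\bmod N,\,N-(kM\bmod N))$ is a bijection of $\{1,\dots,\lfloor(N-1)/2\rfloor\}$ onto itself (coprimality rules out hitting $N/2$ in this range), so the numerator sines are a signed permutation of the denominator sines. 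For the product over $A$ one additionally uses $\sin(2k(m+1-\ell)\pi/(m+1))=-\sin(2k\ell\pi/(m+1))$, which gives $U_{m-\ell}(\beta)=-U_{\ell-1}(\beta)$ on $A$ and recasts this sub-product as $(-1)^{\lfloor m/2\rfloor}\bigl(\prod_{A}U_{\ell-1}(\beta)\bigr)^{2}$. Combining the three $\pm 1$'s produces the claimed $\pm t^{\lfloor m/2\rfloor}2^{m(m-1)}$.

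The main obstacle will be the derivation of the closed-form trigonometric factorization of $g(\cos\theta)$; once that is in hand, the localization of the roots, the splitting of the product into monomial factors, and the evaluation of each sine-ratio sub-product are all direct analogues of techniques already deployed in Section \ref{comp_ev} and in the proof of Lemma \ref{res1}.
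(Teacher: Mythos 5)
Your proposal is correct and follows essentially the same route as the paper: expand the resultant via Lemma \ref{basicres}(i) using the leading coefficient $2^{m-1}$ of $U(x)$, locate its $m-1$ roots through a trigonometric factorization of $U(\cos\theta)$ (your fully factored three-sine form is equivalent to the paper's, which leaves one factor as $\cos\frac{\ell}{2}\theta\sin\frac{m}{2}\theta-\cos\frac{m+1}{2}\theta\sin\frac{\ell-1}{2}\theta$ and checks the third family of roots by substitution), collapse each factor to a monomial in $t$, and evaluate the remaining sine-ratio products by the permutation argument of Lemma \ref{res1}. The only cosmetic differences are your uniform folding bijection in place of the paper's case split on the parity of $m$, and your use of $U_{m-\ell}(\beta)=-U_{\ell-1}(\beta)$ on $A$; both are sound.
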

\begin{proof}
Let $U(x)=U_{\ell-1}(x)\sum_{k=0}^{m-1}U_k(x)-U_m(x)\sum_{k=0}^{\ell-2}U_k(x)$. Then we see from Definition \ref{dS} that $U(x)=S(2x)$. It follows from Remark \ref{rmS} that  $U(x)$ is of degree $m-1$ with leading coefficient $2^{m-1}$. Since $U_m(x)+tU_{\ell-1}(x)U_{m-\ell}(x)$ is of degree $m$, the resultant factors as 
\begin{equation}\label{res3}
	(-1)^{m(m-1)}2^{m(m-1)}\prod_{j=1}^{m-1}(U_m(\beta_j)+tU_{\ell-1}(\beta_j)U_{m-\ell}(\beta_j))
\end{equation}
where $\beta_1,\ldots,\beta_{m-1}$ are zeroes of $U(x)$.

\noindent\textbf{Claim 1}: The $m-1$ zeroes of $U(x)$ are $\cos\frac{2\pi j_1}{m+1}$ ($1\le j_1\le \lfloor\frac{m}{2}\rfloor$), $\cos\frac{2\pi j_2}{\ell}$ ($1\le j_2\le \lfloor\frac{\ell-1}{2}\rfloor$) and $\cos\frac{2\pi j_3}{m+1-\ell}$ ($1\le j_3\le \lfloor\frac{m-\ell}{2}\rfloor$).

Using the trigonometric identity
$$\sin k\theta=\frac{\cos(k-\frac{1}{2})\theta-\cos(k+\frac{1}{2})\theta}{2\sin \frac{\theta}{2}}$$
we conclude that
\begin{eqnarray*}
	U(\cos\theta)&=&U_{\ell-1}(\cos\theta)\sum_{k=0}^{m-1}U_k(\cos\theta)-U_m(\cos\theta)\sum_{k=0}^{\ell-2}U_k(\cos\theta)\\
	&=&\frac{\sin\ell\theta}{\sin^2\theta}\sum_{k=1}^m \sin k\theta-\frac{\sin(m+1)\theta}{\sin^2\theta}\sum_{k=1}^{\ell-1} \sin k\theta\\
	&=&\frac{(\sin\ell\theta)(\cos\frac{\theta}{2}-\cos(m+\frac{1}{2})\theta)-(\sin(m+1)\theta)(\cos\frac{\theta}{2}-\cos(\ell-\frac{1}{2})\theta)}{2\sin^2\theta\sin\frac{\theta}{2}}\\
		&=&\frac{\sin\ell\theta \sin\frac{m+1}{2}\theta\sin\frac{m}{2}\theta-\sin(m+1)\theta\sin\frac{\ell}{2}\theta \sin\frac{\ell-1}{2}\theta}{\sin^2\theta\sin\frac{\theta}{2}}\\
		&=&\frac{2\sin\frac{\ell}{2}\theta \sin\frac{m+1}{2}\theta(\cos\frac{\ell}{2}\theta\sin\frac{m}{2}\theta-\cos\frac{m+1}{2}\theta\sin\frac{\ell-1}{2}\theta)}{\sin^2\theta\sin\frac{\theta}{2}}.
\end{eqnarray*}
It follows that $\cos\frac{2\pi j_1}{m+1}$ ($1\le j_1\le \lfloor\frac{m}{2}\rfloor$) and $\cos\frac{2\pi j_2}{\ell}$ ($1\le j_2\le \lfloor\frac{\ell-1}{2}\rfloor)$ are zeroes of $U(x)$. Let $$g(\theta)=\cos\frac{\ell}{2}\theta\sin\frac{m}{2}\theta-\cos\frac{m+1}{2}\theta\sin\frac{\ell-1}{2}\theta.$$
Then we have 
\begin{eqnarray*}
	&&	g\left(\frac{2\pi j_3}{m+1-\ell}\right)\\&=&\cos\frac{\pi \ell j_3}{m+1-\ell}\sin\frac{\pi m j_3}{m+1-\ell}-\cos\frac{\pi(m+1)j_3}{m+1-\ell}\sin\frac{\pi(\ell-1)j_3}{m+1-\ell}\\
	&=&\cos\frac{\pi \ell j_3}{m+1-\ell}\sin\left(\pi j_3+\frac{\pi (\ell-1) j_3}{m+1-\ell}\right)-\cos\left(\pi j_3+\frac{\pi\ell j_3}{m+1-\ell}\right)\sin\frac{\pi(\ell-1)j_3}{m+1-\ell}\\
	&=&(-1)^{j_3}\cos\frac{\pi \ell j_3}{m+1-\ell}\sin\frac{\pi (\ell-1) j_3}{m+1-\ell}-(-1)^{j_3}\cos\frac{\pi \ell j_3}{m+1-\ell}\sin\frac{\pi (\ell-1) j_3}{m+1-\ell}\\
	&=&0.
\end{eqnarray*}
This verifies that $\cos\frac{2\pi j_3}{m+1-\ell}$ ($1\le j_3\le \lfloor\frac{m-\ell}{2}\rfloor$) are also zeroes of $U(x)$. We are done if we can show that the total number of the established zeroes is exactly $m-1$, i.e., 
\begin{equation*}
	\left\lfloor\frac{m}{2}\right\rfloor+\left\lfloor\frac{\ell-1}{2}\right\rfloor+\left\lfloor\frac{m-\ell}{2}\right\rfloor=m-1.
\end{equation*}
We claim that $\ell-1$ or $m-\ell$ is even. Suppose to the contrary that both $\ell-1$ and $m-\ell$ are odd. Then $\ell$ is even and $m$ is odd. Thus, $\gcd(\ell,m+1)\ge 2$, contradicting  the assumption of this lemma.  Since $\ell-1$ or $m-\ell$ is even, we have 
\begin{equation*}
	\left\lfloor\frac{\ell-1}{2}\right\rfloor+\left\lfloor\frac{m-\ell}{2}\right\rfloor=\left\lfloor\frac{m-1}{2}\right\rfloor
\end{equation*}
and hence 
\begin{equation*}
	\left\lfloor\frac{m}{2}\right\rfloor+\left\lfloor\frac{\ell-1}{2}\right\rfloor+\left\lfloor\frac{m-\ell}{2}\right\rfloor=	\left\lfloor\frac{m}{2}\right\rfloor+\left\lfloor\frac{m-1}{2}\right\rfloor=m-1.
\end{equation*}
This completes the proof of Claim 1.

Note that $\cos\frac{2\pi j_1}{m+1}$ ($1\le j_1\le \lfloor\frac{m}{2}\rfloor$), $\cos\frac{2\pi j_2}{\ell}$ ($1\le j_2\le \lfloor\frac{\ell-1}{2}\rfloor$) and $\cos\frac{2\pi j_3}{m+1-\ell}$ ($1\le j_3\le \lfloor\frac{m-\ell}{2}\rfloor$) are zeroes of $U_m(x)$, $U_{\ell-1}(x)$ and $U_{m-\ell}(x)$, respectively. Then the product $\prod_{j=1}^{m-1}(U_m(\beta_j)+tU_{\ell-1}(\beta_j)U_{m-\ell}(\beta_j)$ in Eq.~\eqref{res3} simplifies to
\begin{equation}\label{ressim}
\prod_{j_1=1}^{\lfloor\frac{m}{2}\rfloor}\left(tU_{\ell-1}\left(\cos\frac{2\pi j_1}{m+1}\right)U_{m-\ell}\left(\cos\frac{2\pi j_1}{m+1}\right)\right)\prod_{j_2=1}^{\lfloor\frac{\ell-1}{2}\rfloor}U_{m}\left(\cos\frac{2\pi j_2}{\ell}\right)\prod_{j_3=1}^{\lfloor\frac{m-\ell}{2}\rfloor}U_{m}\left(\cos\frac{2\pi j_3}{m+1-\ell}\right).
\end{equation}

\noindent\textbf{Claim 2}: It holds that \begin{equation}\label{ex1}\prod_{j_1=1}^{\lfloor\frac{m}{2}\rfloor}U_{\ell-1}\left(\cos\frac{2\pi j_1}{m+1}\right)=\pm 1.\end{equation}

Note that 
\begin{equation}\label{um}\prod_{j_1=1}^{\lfloor\frac{m}{2}\rfloor}U_{\ell-1}\left(\cos\frac{2\pi j_1}{m+1}\right)=\prod_{j_1=1}^{\lfloor\frac{m}{2}\rfloor}\frac{\sin\frac{2\pi \ell j_1}{m+1}}{\sin\frac{2\pi j_1}{m+1}}.\end{equation}
We prove Eq.~\eqref{ex1} by considering the parity of $m$. First assume that $m$ is odd. Let $m'=(m+1)/2$. Then $\lfloor\frac{m}{2}\rfloor=\frac{m-1}{2}=m'-1$ and hence we can rewrite Eq.~\eqref{um} as
\begin{equation}\label{um2}
\prod_{j_1=1}^{\lfloor\frac{m}{2}\rfloor}U_{\ell-1}\left(\cos\frac{2\pi j_1}{m+1}\right)=\prod_{j_1=1}^{m'-1}\frac{\sin\frac{\pi \ell j_1}{m'}}{\sin\frac{\pi j_1}{m'}}.
\end{equation}
Since $\gcd(\ell,m+1)=1$, we must have  $\gcd(\ell,m')=1$. Thus $\{\ell j_1\colon\, 1\le j_1\le m'-1\}$ equals $\{1,2,\ldots,m'-1\}$ module $m'$. This means that the right-hand side of Eq.~\eqref{um2} equals $\pm 1$, completing the  proof of Eq.~\eqref{ex1} for the case that $m$ is odd.

Now assume that $m$ is even. We have 
\begin{equation}\label{m2}
\left(	\prod_{j_1=1}^{\lfloor\frac{m}{2}\rfloor}\frac{\sin\frac{2\pi \ell j_1}{m+1}}{\sin\frac{2\pi j_1}{m+1}}\right)^2=\prod_{j_1=1}^{\frac{m}{2}}\frac{\sin\frac{2\pi \ell j_1}{m+1}\sin\frac{2\pi \ell j_1}{m+1}}{\sin\frac{2\pi j_1}{m+1}\sin\frac{2\pi j_1}{m+1}}=\prod_{j_1=1}^{\frac{m}{2}}\frac{\sin\frac{2\pi \ell j_1}{m+1}\sin\frac{2\pi \ell (-j_1)}{m+1}}{\sin\frac{2\pi j_1}{m+1}\sin\frac{2\pi (-j_1)}{m+1}}=\prod_{\substack{j_1=-\frac{m}{2}\\j_1\neq 0}}^{\frac{m}{2}}\frac{\sin\frac{2\pi \ell j_1}{m+1}}{\sin\frac{2\pi j_1}{m+1}}.
\end{equation}
Let $J=\{-\frac{m}{2},-\frac{m}{2}+1,\ldots,-1\}\cup \{1,2,\ldots,\frac{m}{2}\}$. Note that $J$, together with $0$, constitutes a complete residue system modulo $m+1$.   As $\gcd(2,m+1)=1$, we conclude that $\{2j_1\colon\,j_1\in J\}$ equals $J$, modulo $m+1$. Similarly, as $\gcd(\ell,m+1)=1$ and $\gcd(2,m+1)=1$, we find that $\gcd(2\ell,m+1)=1$ and hence $\{2\ell j_1\colon\,j_1\in J\}$ also equals $J$, modulo $m+1$. Therefore,
\begin{equation}\label{pm}
	\prod_{\substack{j_1=-\frac{m}{2}\\j_1\neq 0}}^{\frac{m}{2}}\frac{\sin\frac{2\pi \ell j_1}{m+1}}{\sin\frac{2\pi j_1}{m+1}}=	\frac{\prod_{j_1\in J}\sin\frac{2\pi \ell j_1}{m+1}}{\prod_{j_1\in J}\sin\frac{2\pi j_1}{m+1}}=\pm 	\frac{\prod_{j_1\in J}\sin\frac{\pi j_1}{m+1}}{\prod_{j_1\in J}\sin\frac{\pi j_1}{m+1}}=\pm 1.
\end{equation}
It follows from Eq.~\eqref{m2} that the sign `$\pm$' in Eq.~\eqref{pm} is indeed a `$+$' and 
\begin{equation*}
	\prod_{j_1=1}^{\lfloor\frac{m}{2}\rfloor}\frac{\sin\frac{2\pi \ell j_1}{m+1}}{\sin\frac{2\pi j_1}{m+1}}=\pm 1.
\end{equation*}
This, combining with Eq.~\eqref{um}, completes the proof of Claim 2.

Using a similar argument as the proof of Claim 2, we can show that all the products $ \prod_{j_1=1}^{\lfloor\frac{m}{2}\rfloor}U_{m-\ell}\left(\cos\frac{2\pi j_1}{m+1}\right)$, $\prod_{j_2=1}^{\lfloor\frac{\ell-1}{2}\rfloor}U_{m}\left(\cos\frac{2\pi j_2}{\ell}\right)$ and $\prod_{j_3=1}^{\lfloor\frac{m-\ell}{2}\rfloor}U_{m}\left(\cos\frac{2\pi j_3}{m+1-\ell}\right)$ equal $\pm 1$. Thus, Eq.~\eqref{ressim} reduces to $\pm t^{\lfloor\frac{m}{2}\rfloor}$ and the lemma follows by Eq.~\eqref{res3}.		
\end{proof}
\begin{corollary}\label{fd2}	Let $\ell, m,n$ be integers with $2\le \ell \le(m+1)/2$ and $\gcd(\ell,m+1)=1$. Then for any $i\in\{1,2,\ldots,n\}$,
	\begin{equation*}
		\prod_{j=1}^m S(\mu_i^{(j)})=\pm \lambda_i^{\lfloor\frac{m}{2}\rfloor}.
	\end{equation*} 
\end{corollary}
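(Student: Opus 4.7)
The plan is to recast $\prod_{j=1}^m S(\mu_i^{(j)})$ as a resultant and then reduce it to Lemma~\ref{res2} via a linear change of variable, in direct parallel with the proof of Corollary~\ref{fd1}. Specifically, set $F_i(x):=S_m(x)-\lambda_i S_{\ell-1}(x)S_{m-\ell}(x)$; this is a monic polynomial of degree $m$ whose roots are precisely $\mu_i^{(1)},\ldots,\mu_i^{(m)}$. By Remark~\ref{rmS} the polynomial $S(x)$ has degree exactly $m-1$, so part~(\rmnum{1}) of Lemma~\ref{basicres} applies and gives
$$\prod_{j=1}^m S(\mu_i^{(j)})=\Res(F_i(x),S(x)).$$

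To bring this expression into the form handled by Lemma~\ref{res2}, I would apply the substitution $x\mapsto 2x$ and exploit the identity $S_k(2x)=U_k(x)$, which follows from $S_k(x)=U_k(x/2)$. Under this rescaling, $F_i(2x)=U_m(x)-\lambda_i U_{\ell-1}(x)U_{m-\ell}(x)$, while $S(2x)$ is precisely the polynomial $U_{\ell-1}(x)\sum_{k=0}^{m-1}U_k(x)-U_m(x)\sum_{k=0}^{\ell-2}U_k(x)$ featured in Lemma~\ref{res2}. Part~(\rmnum{3}) of Lemma~\ref{basicres} with $t=2$, $s=0$ then yields $\Res(F_i(2x),S(2x))=2^{m(m-1)}\Res(F_i(x),S(x))$, and invoking Lemma~\ref{res2} with parameter $-\lambda_i$ evaluates the left-hand side to $\pm(-\lambda_i)^{\lfloor m/2\rfloor}2^{m(m-1)}$. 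Cancelling the factor $2^{m(m-1)}$ and absorbing the sign of $(-1)^{\lfloor m/2\rfloor}$ into the ambient $\pm$ yields the desired identity $\prod_{j=1}^m S(\mu_i^{(j)})=\pm\lambda_i^{\lfloor m/2\rfloor}$.

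The argument is essentially routine once Lemma~\ref{res2} is available. The only points requiring care are (a) using Remark~\ref{rmS} to confirm that $\deg S=m-1$, which ensures the degree assumptions in Lemma~\ref{basicres}(\rmnum{1}) are met and that the leading coefficient of $F_i$ enters trivially, and (b) matching the factor $t^{mn}=2^{m(m-1)}$ from Lemma~\ref{basicres}(\rmnum{3}) against the power of $2$ appearing in Lemma~\ref{res2}. No substantive obstacle is anticipated; all of the genuinely delicate analysis—the explicit factorization of $U(x)$ and the cancellation of the trigonometric products—has already been carried out in Lemma~\ref{res2}.
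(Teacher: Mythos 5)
Your proposal is correct and follows essentially the same route as the paper: express $\prod_{j=1}^m S(\mu_i^{(j)})$ as $\Res(S_m(x)-\lambda_i S_{\ell-1}(x)S_{m-\ell}(x),\,S(x))$ via Lemma~\ref{basicres}(\rmnum{1}), rescale by $x\mapsto 2x$ using Lemma~\ref{basicres}(\rmnum{3}) to pass to the $U_k$ polynomials at the cost of a factor $2^{m(m-1)}$, and invoke Lemma~\ref{res2} with $t=-\lambda_i$. The bookkeeping of the degree of $S(x)$ via Remark~\ref{rmS} and the absorption of the sign $(-1)^{\lfloor m/2\rfloor}$ are exactly as in the paper.
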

\begin{proof}
Recall that 	$S(x)=S_{\ell-1}(x)\sum_{k=0}^{m-1}S_k(x)-S_m(x)\sum_{k=0}^{\ell-2}S_k(x)$ is a monic polynomial of degree $m-1$, and $\mu_i^{(j)}$'s are roots of $S_m(x)-\lambda_iS_{\ell-1}(x)S_{m-\ell}(x)$, which is a monic polynomial of degree $m$. It follows from Lemmas  \ref{basicres} and  \ref{res2} that
\begin{eqnarray*}
&&\prod_{j=1}^m S(\mu_i^{(j)})\\
&=&\Res\left(S_m(x)-\lambda_iS_{\ell-1}(x)S_{m-\ell}(x),S_{\ell-1}(x)\sum_{k=0}^{m-1}S_k(x)-S_m(x)\sum_{k=0}^{\ell-2}S_k(x)\right)\\
&=&\left(\frac{1}{2}\right)^{m(m-1)}\Res\left(U_m(x)-\lambda_iU_{\ell-1}(x)U_{m-\ell}(x),U_{\ell-1}(x)\sum_{k=0}^{m-1}U_k(x)-U_m(x)\sum_{k=0}^{\ell-2}U_k(x)\right)\\
&=&\pm\lambda_i^{\lfloor\frac{m}{2}\rfloor}.
\end{eqnarray*}
This completes  the proof of Corollary \ref{fd2}.
\end{proof}
\noindent\textbf{Proof of Theorem \ref{main}} Let $\Delta_1$ and $\Delta_2$ be the two numbers defined in Eqs.~\eqref{d1} and \eqref{d2}. By Corollaries  \ref{fd1} and \ref{fd2}, we conclude that $\Delta_1=\pm 1$ and $\Delta_2=\pm (\prod_{i=1}^n\lambda_i)^{\lfloor\frac{m}{2}\rfloor}$. As $\lambda_i$'s are eigenvalues of $A(G)$, we have $\prod_{i=1}\lambda_i=\det A(G)$. It follows from Proposition \ref{dwt} that
	\begin{equation*}
	\det W(G\circ P_m^{(\ell)})=\pm \Delta_1\Delta_2\left(\det W(G)\right)^m=\pm (\det A(G))^{\lfloor\frac{m}{2}\rfloor}\left(\det W(G)\right)^m,
\end{equation*}
completing the proof of Theorem \ref{main}.

\section{Proof of Theorem \ref{cons2}}
Recall that \begin{equation*}
	\mathcal{F}=\bigcup_{n\text{~even}} \mathcal{F}_n
\end{equation*}
where $\mathcal{F}_n$ is the collection of all $n$-vertex graphs $G$ such that $\det A(G)=\pm 1$ and $\det W(G)=\pm 2^{\lfloor n/2\rfloor}$.
The key result of this section is the following proposition on the determinant of $A(G\circ P_m^{(\ell)})$.
\begin{proposition}\label{dc}
	 Let $G$ be any graph with $\det A(G)=1$. Then $\det A(G\circ P_m^{(\ell)})=\pm 1$ for any two integers  $m$ and $\ell$ with $1\le \ell\le m$ and $\gcd(\ell,m+1)=1$.
\end{proposition}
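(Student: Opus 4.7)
The plan is to recover $\det A(G\circ P_m^{(\ell)})$ by evaluating the characteristic polynomial at $x=0$. By Corollary \ref{chagpk},
\[
\phi(G\circ P_m^{(\ell)};0) = \prod_{i=1}^n\bigl(S_m(0) - \lambda_i\,S_{\ell-1}(0)\,S_{m-\ell}(0)\bigr),
\]
and since $G\circ P_m^{(\ell)}$ has $mn$ vertices, $\phi(G\circ P_m^{(\ell)};0) = (-1)^{mn}\det A(G\circ P_m^{(\ell)})$. So everything reduces to knowing $S_k(0)$ for the indices that appear. From the three-term recurrence $S_k(x) = xS_{k-1}(x) - S_{k-2}(x)$ together with $S_0 = 1$, $S_1 = x$, a one-line induction gives $S_{2r}(0) = (-1)^r$ and $S_{2r+1}(0) = 0$, so each value $S_k(0)$ is either $0$ or $\pm 1$ depending on the parity of $k$.

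The next step is a case split on the parity of $m$. If $m$ is even, then $(\ell-1) + (m-\ell) = m-1$ is odd, so exactly one of $\ell-1$, $m-\ell$ is odd; hence $S_{\ell-1}(0)\,S_{m-\ell}(0) = 0$ and the product collapses to $S_m(0)^n = (\pm 1)^n = \pm 1$, regardless of $\det A(G)$. If $m$ is odd, then $m+1$ is even, and the coprimality hypothesis $\gcd(\ell, m+1) = 1$ forces $\ell$ to be odd; then $\ell-1$ and $m-\ell$ are both even, so $S_{\ell-1}(0)\,S_{m-\ell}(0) = \pm 1$, while $S_m(0) = 0$. Thus
\[
\phi(G\circ P_m^{(\ell)};0) = \prod_{i=1}^n \bigl(-\lambda_i\,S_{\ell-1}(0)\,S_{m-\ell}(0)\bigr) = \pm \prod_{i=1}^n \lambda_i = \pm \det A(G) = \pm 1,
\]
so again $\det A(G\circ P_m^{(\ell)}) = \pm 1$.

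There is no substantial obstacle: the argument is pure parity bookkeeping once one observes that $\phi(G\circ P_m^{(\ell)};0)$ factors according to Corollary \ref{chagpk}. The only role of the hypothesis $\gcd(\ell, m+1) = 1$ is in the odd-$m$ case, where it ensures that $\ell$ is odd so that $S_{\ell-1}(0)$ and $S_{m-\ell}(0)$ are simultaneously nonzero; otherwise the product would vanish and the determinant would be $0$, consistent with the fact (from Theorem \ref{main}) that such $\ell$ would produce a multiple eigenvalue.
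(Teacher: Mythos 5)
Your proposal is correct and follows essentially the same route as the paper's own proof: evaluate $\phi(G\circ P_m^{(\ell)};x)$ at $x=0$ via Corollary \ref{chagpk}, compute $S_k(0)$ from the recurrence, and split on the parity of $m$, using $\gcd(\ell,m+1)=1$ only in the odd-$m$ case to force $\ell$ odd. No differences worth noting.
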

\begin{proof}
Noting that $\phi(G\circ P_m^{(\ell)},0)$, the constant term of the characteristic polynomial $\phi(G\circ P_m^{(\ell)};x)$ is $\pm \det A(G\circ P_m^{(\ell)})$,  Corollary \ref{chagpk} implies that
\begin{equation}\label{dd}
	 \det A(G\circ P_m^{(\ell)})=\pm \prod_{i=1}^{n}(S_m(0)-\lambda_i S_{\ell-1}(0)S_{m-\ell}(0)), 
\end{equation}
where $\lambda_1,\ldots,\lambda_n$ are the eigenvalues of $G$.

Recalling that $S_0(x)=1, S_1(x)=x$ and $S_k(x)=xS_{k-1}(x)-S_{k-2}(x)$ and letting $x=0$, we find that 
\begin{equation*}
	S_k(0)=\begin{cases}
		\pm 1,&\text{ $k$ even,}\\
		0,&\text{ $k$ odd.}
	\end{cases}
\end{equation*}
We first consider the case that $m$ is odd. As $\gcd(\ell,m+1)=1$, we see that $\ell$ must be odd. This means that both $\ell-1$ and $m-\ell$ are even. Thus, $S_m(0)=0, S_{\ell-1}(0)S_{m-\ell}(0)=\pm 1$   and hence  Eq.~\eqref{dd} reduces to 
\begin{equation*}
	\det(A(G\circ P_m^{(\ell)}))=\pm \prod_{i=1}^{n}\lambda_i =\pm \det A(G)=\pm 1.
\end{equation*}
Now assume that $m$ is even. Note that the sum of $\ell-1$ and $m-\ell$ is $m-1$, which is odd. Then, $\ell-1$ or $m-\ell$ is odd, which implies that $S_{\ell-1}(0)S_{m-\ell}(0)=0$. Now Eq.~\eqref{dd} reduces to 
\begin{equation*}
	\det(A(G\circ P_m^{(\ell)}))=\pm \prod_{i=1}^{n}S_m(0)=\pm 1, 
\end{equation*}
This completes the proof of  Proposition \ref{dc}.
\end{proof}
\begin{proposition}\label{cons2sim}
Let $G\in \mathcal{F}$. Then $G\circ P_m^{(\ell)}\in \mathcal{F}$ for any two integers  $m$ and $\ell$ with $1\le \ell\le m$ and $\gcd(\ell,m+1)=1$.
\end{proposition}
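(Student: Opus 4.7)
The plan is to verify directly the two defining conditions of $\mathcal{F}_{mn}$ for the rooted product $G\circ P_m^{(\ell)}$, relying on the two main results already established: Theorem \ref{main} for the walk-matrix determinant and Proposition \ref{dc} for the adjacency determinant. Since $G\in\mathcal{F}$ means $G\in\mathcal{F}_n$ for some even $n$, the graph $G\circ P_m^{(\ell)}$ has $mn$ vertices, which is automatically even. So the only remaining tasks are to show that $\det A(G\circ P_m^{(\ell)})=\pm 1$ and that $\det W(G\circ P_m^{(\ell)})=\pm 2^{\lfloor mn/2\rfloor}$.

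The first equality is immediate from Proposition \ref{dc}: the hypotheses $\det A(G)=\pm 1$ and $\gcd(\ell,m+1)=1$ are both in force, and the argument in that proposition uses only $\prod_{i=1}^{n}\lambda_i=\det A(G)=\pm 1$, so the sign of $\det A(G)$ is immaterial. For the second equality I would apply Theorem \ref{main}: under $\gcd(\ell,m+1)=1$ it gives
\begin{equation*}
\det W(G\circ P_m^{(\ell)})=\pm(\det A(G))^{\lfloor m/2\rfloor}(\det W(G))^m.
\end{equation*}
Plugging in $\det A(G)=\pm 1$ and $\det W(G)=\pm 2^{\lfloor n/2\rfloor}=\pm 2^{n/2}$ (where the second equality uses that $n$ is even) produces $\pm 2^{mn/2}$, which coincides with $\pm 2^{\lfloor mn/2\rfloor}$ because $mn$ is even. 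Hence $G\circ P_m^{(\ell)}\in\mathcal{F}_{mn}\subseteq\mathcal{F}$.

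I do not anticipate any genuine obstacle here: the proposition is a clean corollary of Theorem \ref{main} and Proposition \ref{dc}, and all the real work has already been done. The single point that deserves attention is the exponent bookkeeping for the power of $2$, where the assumption that $n$ is even is essential in order to equate $m\lfloor n/2\rfloor$ with $\lfloor mn/2\rfloor$; without this evenness one could not in general match the exact power of $2$ demanded by the definition of $\mathcal{F}_{mn}$, which is precisely why the family $\mathcal{F}$ was defined to include only graphs of even order.
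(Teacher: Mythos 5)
Your proof is correct and follows essentially the same route as the paper's: apply Theorem \ref{main} to get $\det W(G\circ P_m^{(\ell)})=\pm 2^{mn/2}$ and Proposition \ref{dc} to get $\det A(G\circ P_m^{(\ell)})=\pm 1$, using the evenness of $n$ (hence of $mn$) to match the required power of $2$. Your side remark that Proposition \ref{dc} is stated for $\det A(G)=1$ but its proof only uses $\det A(G)=\pm 1$ is a fair and slightly more careful reading than the paper's, which applies it without comment.
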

\begin{proof}
	Let $n$ be the order of $G$. Then $n$ is even, $\det A(G)=\pm 1$ and $\det W(G)=2^\frac{n}{2}$. By Theorem \ref{main}, we have 
	$$\det W(G\circ P_m^{(\ell)})=\pm (\det A(G))^{\lfloor\frac{m}{2}\rfloor}(\det W(G))^m=\pm 2^\frac{mn}{2}.$$
	Furthermore, we have $\det W(G\circ P_m^{(\ell)})=\pm 1$ by Proposition \ref{dc}. Therefore, $G\circ P_m^{(\ell)}\in \mathcal{F}$, as desired.
\end{proof}
Since any graph in $\mathcal{F}$ is DGS \cite{wang2017}, Theorem \ref{cons2} clearly follows by Proposition \ref{cons2sim}.

\section*{Declaration of competing interest}
There is no conflict of interest.
\section*{Acknowledgments}
This work is supported by the National Natural Science Foundation of China (Grant No. 12001006).

\end{document}